\newtheorem{theorem}{Theorem}[section]
\newtheorem{proposition}[theorem]{Proposition}
\newtheorem{lemma}[theorem]{Lemma}
\theoremstyle{definition}
\newtheorem{remark}[theorem]{Remark}
\newtheorem{definition}[theorem]{Definition}
\newtheorem{notation}[theorem]{Notation}
\newcommand{\R}{\mathbb{R}}
\newcommand{\C}{\mathbb{C}}
\newcommand{\Z}{\mathbb{Z}}
\newcommand{\Q}{\mathbb{Q}}
\newcommand\cO{\mathcal{O}}
\newcommand\gp{\mathfrak{p}}
\newcommand\sF{\mathscr{F}}
\newcommand\sG{\mathscr{G}}
\newcommand\sL{\mathscr{L}}
\newcommand\sS{\mathscr{S}}
\DeclareMathOperator{\rG}{G}
\DeclareMathOperator{\rH}{H}
\DeclareMathOperator{\rI}{I}
\DeclareMathOperator{\Gal}{Gal}
\DeclareMathOperator{\Hom}{Hom}
\DeclareMathOperator{\ad}{ad}
\DeclareMathOperator{\Reg}{Reg}
\DeclareMathOperator{\Frob}{Frob}
\DeclareMathOperator{\GL}{GL}
\newcommand{\loc}{\mathrm{loc}}
\newcommand{\Ind}{\mathrm{Ind}}
\newcommand{\ord}{\mathrm{ord}}
\newcommand{\Fil}{\mathrm{Fil}}
\newcommand{\dR}{\mathrm{dR}}
\newcommand{\crys}{\mathrm{crys}}
\newcommand{\BK}{\mathrm{BK}}
\newcommand{\sLm}{\mathscr{L}_{\mbox{-}}}
\author{Mladen Dimitrov}
\address{University of Lille, CNRS, UMR 8524 -- Laboratoire Paul Painlev\'e, 59000 Lille, France.}
\email{mladen.dimitrov@univ-lille.fr}
\author{Alexandre Maksoud}
\address{University of Luxembourg, Unit\'e de Recherche en Math\'ematiques, Maison du nombre, 6~avenue de la Fonte, 4364 Esch-sur-Alzette, Luxembourg. }
\email{alexandre.maksoud@uni.lu}
\dedicatory{To Bernadette Perrin-Riou on the occasion of her 65th birthday}
\title{$\sL$-invariants of Artin motives}
\begin{document}
\maketitle

\begin{abstract}
We compute Benois $\sL$-invariants of weight $1$ cuspforms and of their adjoint representations 
and show how this extends Gross' $p$-adic regulator to   Artin motives 
which are not critical in the sense of Deligne. Benois' construction depends on the choice of a  regular submodule
which is well understood when the representation is $p$-regular, as it then amounts to the choice of a ``motivic'' $p$-refinement. 
The situation is dramatically different in the $p$-irregular case, where the  regular submodules are 
parametrized by a flag variety and thus depend on  continuous parameters.  We are nevertheless able to show in some examples, how Hida theory and the geometry of the eigencurve can be used to detect a finite number of choices of arithmetic 
and ``mixed-motivic'' significance. 
\end{abstract}

\setcounter{tocdepth}{2}
\tableofcontents

\section{Introduction}
Let $p$ be a prime number and let $\varrho : \Gal(\overline{\Q}/\Q) \to \GL_{\overline{\Q}_p}(W)$ be a $d$-dimensional  $p$-adic Galois representation having  finite image and not containing the trivial representation, i.e.  $\rH^0(\Q,W)=0$.
 After choosing an embedding $\iota_p : \overline{\Q} \subseteq \overline{\Q}_p$ one may think of $\varrho$ as the $p$-adic realization of an Artin motive $[\varrho]$ with $\overline{\Q}$-coefficients whose arithmetic dual $[\varrho]^*(1)$ will be denoted by $M$.
When $\varrho$ is unramified at $p$, then the $p$-adic realization $V=W^*(1)$ of $M$ is crystalline at $p$ and, given {\it any} regular submodule $D\subset D_\crys(V)$ (see Def.~\ref{def:module_reg}),  Perrin-Riou \cite{perrin-riou:Lp} predicts the existence of  $p$-adic $L$-functions $L_p(V,D,s)$ and $L_p(W,D^\perp,s)$  interpolating the corrected at $p$ values of the Artin $L$-functions $L(\varrho^*(1),s)$ and $L(\varrho,s)$ at rational integers. When the correcting Euler factor at $p$  vanishes, we say that 
the $p$-adic $L$-function admits a trivial (or extra)  zero. Under some mild assumptions, which are satisfied by all $M$ as above, 
Benois has proposed in \cite[\textsection5.1.2]{benoiscrys} a formula for the leading term of the Taylor's expansion at $s=0$ of $L_p(V,D,s)$. 
This formula, called the Trivial Zero Conjecture,  differs from the interpolation formula proposed by Perrin-Riou by a certain scalar
 $\sL(V,D)$ defined in terms of Bloch--Kato Selmer groups $\rH^1_f(\Q, V)\subset \rH^1_{f,p}(\Q, V)$ (whose definitions are recalled in \textsection\ref{BK-groups}) and 
  generalizing Greenberg's $\sL$-invariant\footnote{The definition of $L_p(V,D,s)$, resp.   $L_p(W,D^\perp,s)$, depends on the choice of a Galois stable lattice in $V$, resp.  $D$, but $\sL(V,D)$, resp. $\sL(W,D^\perp)$, does not, hence our abuse of notation.}. 

Trivial zeros  occur abundantly in our setting and were  discovered by Gross when $\varrho$ is induced from a totally odd Hecke character over a totally real field. However the construction of cohomological $\sL$-invariants for a general Artin motive, presented in Section~\ref{benois-L}, was only recently introduced by Benois. 
Rewriting Benois' definition as a determinant involving explicit linear combinations (with algebraic coefficients) of $p$-adic logarithms of  $p$-units (Theorem~\ref{thm:calcul_L_inv}),  allows us to connect $\sL(V,D)$ with Gross' regulator (see \textsection\ref{sec:gross}) and Hida's $\sL$-invariant for the adjoint of a $p$-regular weight one cuspform (see \textsection\ref{sec:ad_reg}). 

Another fundamental aspect of the theory of $p$-adic $L$-functions is to determine whether the $\sL$-invariant vanishes. The Weak $p$-adic Schanuel Conjecture from transcendental number theory strongly indicates that  $\sL(V,D)\ne 0$  when the regular submodule $D$ is motivic, i.e.  has a $\overline{\Q}$-structure, which is always the case when $V$ is regular at $p$. In contrast, in the $p$-irregular setting 
$D$ depends on  continuous parameters and $\sL(V,D)$ could very well vanish for some $D$. 
 We illustrate the dependence of $\sL(V,D)$ on $D$ in the baby case of a weight one cuspform with complex multiplication  (CM)  which is irregular at $p$ (see \textsection\ref{CM-case}).

Constructing non-trivial classes in Bloch--Kato Selmer groups is a central yet notoriously difficult problem in number theory as witnesses the long-standing Birch and Swinnerton--Dyer Conjecture. The Euler Systems approach, that we will not pursue here, sometimes succeeds in writing down such classes  explicitly ({\it e.g.} by using cyclotomic, resp. elliptic, units
in the case of the Kubota--Leopoldt, resp. Katz, p-adic $L$-function). 
  While a motive $M$ as above cannot be adjoint (as it is pure of motivic weight $-2$), a theorem of Benois--Horte showing $\sL(V,D)=(-1)^e \sL(W,D^\perp)$, where  $e$ is the order of the trivial zero, allows us to use  its motivic dual $M^*(1)$ 
  which is pure of  weight $0$ and thus can be adjoint. 
 The advantage then is that one can use  Mazur's deformation theory of Galois representations to study first order deformations of the $p$-adic realization of a motive and produce classes in the $\rH^1$ of its adjoint. 
   A particularly interesting case, discussed in \textsection\ref{adjoint-CM-irreg}, is that of the adjoint of a weight $1$ CM cuspform $f$ which is irregular at $p$, for which one encounters a double trivial zero. Using some results in 
   \cite{betina-dimitrov} and the explicit description of $\sL(W,D^\perp)$ in \textsection\ref{sec:dualL}, one  deduces an intriguing expression \eqref{L-inv-F} for the $\sL$-invariant in the  direction
   of a non-CM  Hida family $\sF$ containing $f$ (see \textsection\ref{CM-case} and \textsection\ref{sec:basis} for notation):    
\[\sL(\ad^0(\varrho_f), W_{\sF}^+)=
\frac{2\sL_\gp\cdot \left(\sLm(\bar\varphi)  \sL(\varphi)+  \sLm(\varphi) \sL(\bar\varphi)\right)}{\sLm(\bar\varphi)+\sLm(\varphi)}. \]
An interesting feature of this formula is the presence of the \emph{anti-cyclotomic} $\sL$-invariants $\sLm(\varphi)$ and $\sLm(\bar\varphi)$. Although the regular submodule $D$ associated with $\sF$ has no natural $\overline{\Q}$-structure, the $\sL$-invariant is an algebraic expression in $p$-adic logarithms of $p$-units, hence it should not vanish according to the Weak $p$-adic Schanuel Conjecture.

 Let us mention that, on the algebraic side, it was first shown by Greenberg \cite{greenberg:l-adic}, Federer-Gross \cite{federer-gross} and later proved in greater generality by Benois \cite{benoiscrys} via the machinery of Selmer complexes that $\sL$-invariants control the fine structure of Iwasawa modules attached to $\varrho$. These control theorems might be taken as an additional motivation for the computation of Benois' $\sL$-invariant for arbitrary Artin motives, where a construction of a Perrin-Riou's $p$-adic $L$-function is at the moment totally out of reach.

\section{Computation of Benois $\sL$-invariants}\label{benois-L}

Given a field $F$ of characteristic $0$, we let $\cO_F$ denote its ring of integers and 
$\rG_F$ its absolute Galois group. When $F$ is local, we let 
$\rI_F\subset \rG_F$ denote the inertia subgroup. 

For any rational prime $\ell$ we choose an embedding  $\iota_\ell : \overline{\Q} \subseteq \overline{\Q}_\ell$. 
We also  fix an embedding $\iota_\infty : \overline{\Q} \subseteq \C$ which determines a  
 complex conjugation $\tau \in G_{\Q}$.

\subsection{Bloch--Kato Selmer groups}  \label{BK-groups}

\subsubsection{Local and global Selmer groups} Let $V$ be a $p$-adic representation of $\rG_F$. 

If $F$ is a finite extension of $\Q_\ell$, the  local Bloch--Kato subgroup is  defined as
\[\rH^1_f(F,V)=\begin{cases} 
	\ker\left( \rH^1(F,V)\to \rH^1(\rI_F,V)\right)& \text{ , if } \ell\ne p, \\
	\ker\left( \rH^1(F,V)\to \rH^1(F, B_\crys \otimes_{\Q_p }V)\right)& \text{ , if } \ell=p,
\end{cases}\]
where $B_\crys$ is the Fontaine ring of crystalline periods (see \cite[\textsection3]{bloch-kato}). 

If $F$ is a number field and   $S$ a   finite set  of finite places of  $F$, then the  Selmer group is defined as
\[\rH^1_S(F,V)=
\ker\left( \rH^1(F,V)\to \bigoplus_{v\notin S} 
\frac{\rH^1(F_v, V)}{\rH^1_f(F_v, V)}\right).
\]
Here, the map $\rH^1(F,V)\to \rH^1(F_v,V)$ is given by the restriction of cocycles to $\rG_{F_v}$ and shall be denoted by $\loc_v$.
We will be particularly interested in the Bloch--Kato Selmer group 
$\rH^1_f(F,V)=\rH^1_\varnothing(F,V)$ and the 
group $\rH^1_{f,p}(F,V)=\rH^1_{\{p\}}(F,V)\supset \rH^1_f(F,V)$ 
consisting of cocycles that are unramified outside $p$. 

\subsubsection{Kummer Theory}
When $V=\overline{\Q}_p(1)$, Kummer theory provides an explicit description of the Bloch--Kato Selmer groups. For all integers 
$n\geqslant 1$, the Kummer map $\kappa_{F,n}: F^\times/(F^{\times})^{p^n} \to \rH^1(F,\mu_{p^n})$ is an isomorphism sending $a\in F^\times$ to the class of the cocycle given by $g\mapsto g(a^{1/p^n})/a^{1/p^n}$, where $a^{1/p^n}\in \overline{F}^\times$ is any $p^n$-th root of $a$ (see \cite[Chap.~X, \textsection3.b)]{serre:cohomologie_galoisienne}). It is clear from the definition that $\kappa_{F,n}$  is functorial in $F$ and Galois equivariant. Taking an inverse limit on $n$ and extending scalars to  $\overline{\Q}_p$ yields a Galois equivariant isomorphism 
\[ \kappa_F : \widehat{F^\times} \otimes_{\Z_p } \overline{\Q}_p  \xrightarrow{\sim}  \rH^1(F,\overline{\Q}_p(1)),  \]
where $\widehat{A} = \varprojlim_n A / A^{p^n}$ denotes the $p$-adic completion of an abelian group $A$. Moreover, when $F$ is local, $\kappa_F$ restricts to an isomorphism
\[\kappa_F : \widehat{\cO_F^\times}  \otimes_{\Z_p }  \overline{\Q}_p \xrightarrow{\sim}  \rH^1_f(F,\overline{\Q}_p(1)), \]
(see e.g. \cite[Example 3.9]{bloch-kato}).
Note that if $F$ is a finite extension of $\Q_\ell$ and $\ell\ne p$, then $\rH^1_f(F,\overline{\Q}_p(1))$ vanishes, as   
$\widehat{\cO_F^\times}$ is finite.  In contrast, for a finite extension $F$ of $\Q_p$,  
$\widehat{\cO_F^\times}$   is  the subgroup $\cO_F^{1}$ of principal units of $\cO_F^\times$.  
Finally, when $F$ is a number field and $S$ a finite set of finite places of $F$, the global Kummer map restricts to a Galois equivariant isomorphism  
\begin{equation}\label{eq:BK-global}
\kappa_F: \widehat{\cO_F\left[\tfrac{1}{S}\right]^\times}  \otimes_{\Z_p }  \overline{\Q}_p \xrightarrow{\sim}
\rH^1_S(F,\overline{\Q}_p(1)),
\end{equation}
where $\cO_F\left[\tfrac{1}{S}\right]^\times$ denotes the group of $S$-units of $F$.

Our next task is to give an explicit description of Bloch--Kato Selmer groups for (duals of) Artin representations.  

\subsubsection{Units}

Henceforth $V=W^*(1)$, where $W$ is a $\overline{\Q}_p$-valued Artin representation unramified at $p$ as in the introduction.
Denote by  $\rH^1_f(V)\subseteq \rH^1(V)=\rH^1(\Q,V)$  the global Bloch--Kato Selmer group of $V$ and by
	 $\rH^1_{f,p}(V) $  the group consisting of cocycles that are unramified outside $p$.  
	 
Let $H$ be the fixed field of the kernel of $\varrho : \Gal(\overline{\Q}/\Q) \to \GL_{\overline{\Q}_p}(W)$ 
and $G=\Gal(H/\Q)$.   Similarly, we let  $E$ be the fixed field of $\ker(\varrho_{\vert \rG_{\Q_p }})$ and  $G_p=\Gal(E/\Q_p )$. Thus, $E$ is the $p$-adic completion of $H$ with respect to the place defined by $\iota_p$. 

\begin{definition}
We let 	$U_H = \cO_H^\times \otimes_{\Z} \overline{\Q}_p$ and $U_H^{(p)}=\cO_H\left[\frac{1}{p}\right]^\times \otimes_{\Z} \overline{\Q}_p$ and we denote by  $U_{H,\overline{\Q}}=\cO_H^\times \otimes_{\Z} \overline{\Q}$ and
  $U^{(p)}_{H,\overline{\Q}}=\cO_H\left[\frac{1}{p}\right]^\times \otimes_{\Z} \overline{\Q}$ the natural  $\overline{\Q}$-subspaces. 
\end{definition}

\begin{lemma}\label{lem:BK_desc}
 	\begin{enumerate}[(i)]
	\item The local Kummer map $\kappa_E$ yields a canonical identification: 
\[\rH^1_f(\Q_p ,V)=\Hom_{G_p}(W,\cO_E^{1}\otimes_{\Z_p } \overline{\Q}_p).\] 
	\item The global Kummer map	$\kappa_H$ yields  canonical  identifications: 
\[	\rH^1_f(V)=\Hom_G(W,U_H) \text{ and } \rH^1_{f,p}(V)=\Hom_G(W,{U_H^{(p)}}).\]  
	 \item The localization map
 \[\loc_p : \rH^1_f(V) \longrightarrow \rH^1_f(\Q_p ,V)\]
 is induced by the diagonal embedding  $U_H=\widehat{\cO_H^\times} \otimes_{\Z_p} \overline{\Q}_p \to \widehat{(\cO_H\otimes\Z_p)^\times}\otimes_{\Z_p} \overline{\Q}_p$ and  Frobenius reciprocity isomorphism 
  \[\Hom_{G}(W,\widehat{(\cO_H\otimes\Z_p)^\times}\otimes_{\Z_p} \overline{\Q}_p)=
  \Hom_{G_p}(W,\cO_E^{1}\otimes_{\Z_p } \overline{\Q}_p).\]
 \end{enumerate}
\end{lemma}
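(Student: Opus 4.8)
The plan is to deduce everything from the Kummer-theoretic description of $\rH^1$ of $\overline{\Q}_p(1)$ recalled just above, combined with Frobenius reciprocity (Shapiro's lemma) for the finite group extensions $H/\Q$ and $E/\Q_p$. The guiding principle is that $V=W^*(1)$ together with the isomorphism $\rH^1(F,V)\cong (\rH^1(F,\overline{\Q}_p(1))\otimes W^*)^{\rG_F}$ — valid since $W$ has finite image, so $W^*$ is a direct summand-compatible coefficient twist and taking $\rG_F$-invariants is exact on the relevant category of $\overline{\Q}_p[\rG_F]$-modules — reduces the computation of the Selmer groups of $V$ to the Selmer groups of $\overline{\Q}_p(1)$ over the splitting field, with a $G$- or $G_p$-equivariance constraint that is exactly a $\Hom$-space.

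\medskip
\textbf{Step 1 (local statement (i)).} Over $F=\Q_p$, inflation-restriction along $\rG_E\trianglelefteq\rG_{\Q_p}$ gives $\rH^1(\Q_p,V)=\rH^1(E,V)^{G_p}$, because $\rH^1(G_p,V^{\rI})$ and the relevant $\rH^2$ contributions vanish after tensoring with $\overline{\Q}_p$ (orders of finite groups are invertible). Since $\rG_E$ acts trivially on $W$, one has $\rH^1(E,V)=\rH^1(E,\overline{\Q}_p(1))\otimes_{\overline{\Q}_p} W^*$, and $\kappa_E$ identifies $\rH^1(E,\overline{\Q}_p(1))$ with $\widehat{E^\times}\otimes\overline{\Q}_p$ and $\rH^1_f(E,\overline{\Q}_p(1))$ with $\widehat{\cO_E^\times}\otimes\overline{\Q}_p=\cO_E^1\otimes_{\Z_p}\overline{\Q}_p$. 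Taking $G_p$-invariants turns the tensor-with-$W^*$ into $\Hom_{G_p}(W,-)$, and one checks that the Bloch--Kato condition at $p$ (triviality in $\rH^1(\Q_p,B_\crys\otimes V)$) corresponds under $\kappa_E$ exactly to landing in the principal units, using that $E/\Q_p$ is the field cut out by $W$ and that $B_\crys$-triviality is detected after the finite base change to $E$. This yields $\rH^1_f(\Q_p,V)=\Hom_{G_p}(W,\cO_E^1\otimes_{\Z_p}\overline{\Q}_p)$.

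\medskip
\textbf{Step 2 (global statement (ii)).} Run the identical argument over $\Q$ with $H/\Q$ in place of $E/\Q_p$: inflation-restriction gives $\rH^1(\Q,V)=\rH^1(H,V)^G=\big(\rH^1(H,\overline{\Q}_p(1))\otimes W^*\big)^G$, and \eqref{eq:BK-global} identifies the relevant Selmer subgroups of $\rH^1(H,\overline{\Q}_p(1))$ with $\widehat{\cO_H^\times}\otimes\overline{\Q}_p$ (the unramified-everywhere condition) and with $\widehat{\cO_H[1/p]^\times}\otimes\overline{\Q}_p$ (unramified outside $p$); here one uses that the places of $H$ above a prime $\ell\ne p$ contribute $\rH^1_f=0$ (as $\widehat{\cO_{H_v}^\times}$ is finite), so the global Bloch--Kato condition over $\Q$ pulls back to the $S$-unit condition over $H$ with $S=\varnothing$ resp. $S=\{p\}$. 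One must also note $\widehat{\cO_H^\times}\otimes_{\Z_p}\overline{\Q}_p=\cO_H^\times\otimes_{\Z}\overline{\Q}_p=U_H$ (and similarly for $S$-units), since tensoring the finitely generated group $\cO_H^\times$ with $\overline{\Q}_p$ kills torsion and is insensitive to $p$-adic completion. Taking $G$-invariants of the $W^*$-twist gives $\Hom_G(W,U_H)$ and $\Hom_G(W,U_H^{(p)})$.

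\medskip
\textbf{Step 3 (compatibility (iii)).} The localization map $\loc_p$ is induced on the level of $\overline{\Q}_p(1)$-cohomology by the map $\rH^1_S(H,\overline{\Q}_p(1))\to\bigoplus_{v\mid p}\rH^1(H_v,\overline{\Q}_p(1))$, which under Kummer theory is the diagonal $S$-unit embedding $\widehat{\cO_H[1/p]^\times}\otimes\overline{\Q}_p\to\widehat{(\cO_H\otimes\Z_p)^\times}\otimes\overline{\Q}_p$ (decomposing $\cO_H\otimes\Z_p=\prod_{v\mid p}\cO_{H_v}$); this is functoriality of $\kappa$ in $F$. Passing to $G$-invariants and then identifying the $\Q_p$-side via Shapiro's lemma — the $G$-set of places above $p$ being $G/G_p$, so $\Hom_G(W,\prod_{v\mid p}(\cdots))=\Hom_{G_p}(W,\cO_E^1\otimes_{\Z_p}\overline{\Q}_p)$ — gives precisely the asserted description of $\loc_p$.

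\medskip
\emph{Main obstacle.} The one point requiring genuine care, rather than bookkeeping, is the matching of Bloch--Kato local conditions after the finite base change: namely that under inflation-restriction $\rH^1_f(\Q_p,V)$ corresponds exactly to $\rH^1_f(E,V)^{G_p}$ (and likewise that the global $\rH^1_f(V)$ sees $\rH^1_f$ at every place, including $v\mid p$, after restriction to $H$). This is where one invokes that crystallinity / the finite-part condition is stable under and detected by unramified (indeed arbitrary finite) base change, together with the vanishing of the $G_p$- and $G$-cohomology of the kernels and cokernels because $\#G$, $\#G_p$ are invertible in $\overline{\Q}_p$; once that is in hand, the rest is the dictionary between $\kappa$, Shapiro's lemma, and $\Hom$-spaces.
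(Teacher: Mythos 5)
Your argument is correct and follows essentially the same route as the paper: inflation--restriction together with the vanishing of finite-group cohomology over $\overline{\Q}_p$ to reduce to $\rH^1_f(E,\overline{\Q}_p(1))^{G_p}$ resp. $\rH^1_S(H,\overline{\Q}_p(1))^G$, the $G_p$- and $G$-equivariance of the Kummer maps, and functoriality/Shapiro for the description of $\loc_p$. You merely spell out details the paper leaves implicit (notably that the Bloch--Kato local condition is preserved and detected under the finite base changes), which is exactly the content the paper delegates to the equivariance of $\kappa$ and the reference to Rubin.
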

\begin{proof} By the  inflation-restriction exact sequence,  $\rH^1_f(\Q_p ,V)$ is canonically isomorphic with  
\[\rH^1_f(E,V)^{G_p}=\left(\rH^1_f(E,\overline{\Q}_p(1))\otimes W^*\right)^{G_p}=\Hom_{G_p}\left(W,\rH^1_f(E,\overline{\Q}_p(1))\right)\] 
and the first statement then follows form the fact that   $\kappa_E$ is $G_p$-equivariant. 
The second  statement is obtained similarly using in addition that $\widehat{\cO_H\left[\tfrac{1}{S}\right]^\times}=\cO_H\left[\tfrac{1}{S}\right]^\times  \otimes \Z_p $. The third  statement is a consequence  of the other two
(see also \cite[Appendix~B, Cor.~ 5.3.(ii)]{rubinES})
\end{proof}

\begin{remark}
\begin{enumerate}[(i)]
	\item By  the finiteness of the ideal class group of $H$, we have $\rH^1_f(V^*(1))=\rH^1_f(W)=\{0\}$, 
	in  concordance  with the non-vanishing of $L(\varrho,1)$. 
	 \item  Let $W_{\overline{\Q}}$ be a   $\overline{\Q}$-structure in the finite image
	 $\rG_{\Q}$-representation $W$ with respect to which the motive $M^*(1)$ is defined.  	 
	 Let  $\rH^1_f(M)$, resp.  $\rH^1_{f,p}(M)$, be the group  classifying extensions of the trivial motive by $M$
	 which are unramified everywhere, resp. everywhere outside $p$, in the category of motives over $\Q$ with $\overline{\Q}$-coefficient	 (see \textit{e.g.}  \cite[\textsection2.2]{colmezfonctions}). 
	 It is expected that 
	 \begin{align*}
	 \rH^1_f(M) & =\rH^1_f(V)_{\overline{\Q}}:= \Hom_G(W_{\bar \Q},U_{H,\overline{\Q}}),  \text{ and }   \\\rH^1_{f,p}(M) & =\rH^1_{f,p}(V)_{\overline{\Q}}:= \Hom_G(W_{\bar \Q},{U_{H,\overline{\Q}}^{(p)}}).
	 \end{align*} 
	  Note that  by Lemma~\ref{lem:BK_desc}(ii)  this is consistent with Jannsen's Conjecture (see \cite[Conj.~2.3]{colmezfonctions})  identifying $\rH^1_f(M)_{\overline{\Q}_p}$ and $\rH^1_{f,p}(M)_{\overline{\Q}_p}$ with $\rH^1_f(V)$ and $\rH^1_{f,p}(V)$, respectively. 
\end{enumerate}
\end{remark}

\subsubsection{Dimensions} \label{sec:dim}
Let $d^+=\dim \rH^0(\R,W)$, resp.  $d^-=d-d^+$,  be the dimension of the largest subspace of $W$ on which  the complex conjugation acts as $1$, resp. $-1$,  and let $f=\dim \rH^0(\Q_p ,W)$. By Dirichlet's Unit Theorem, the Bloch--Kato Selmer groups $\rH^1_f(V)$ and $\rH^1_{f,p}(V)$ have dimension $d^+$ and $d^+ + f$, respectively.

\subsubsection{Transcendence Theory}\label{subsec:log_p} 
The $\Z_p$-module $ \widehat{E^{\times}}= p^{\Z_p}\times \cO_E^1 $ is endowed with a $p$-adic valuation map 
 $\ord_p : \widehat{E^{\times}} \to \Z_p $, factoring through the  first projection, and  
 an  Iwasawa determination of a $p$-adic logarithm $\log_p : \widehat{E^{\times}} \to  p\cO_E\subset \overline{\Q}_p $, factoring through the second  projection. 
 We consider their $\overline{\Q}_p$-linear extensions: 
 \[ \ord_p : \widehat{E^{\times}} \otimes_{\Z_p}  \overline{\Q}_p \longrightarrow \overline{\Q}_p\quad \text{ and } \quad \log_p : \widehat{E^{\times}} \otimes_{\Z_p}  \overline{\Q}_p \longrightarrow \overline{\Q}_p.\]
 Pre-composing with  $\iota_p$ allows one to 
 apply both   $\log_p$ and $\ord_p$  to elements of $U_{H}^{(p)}\supset U_{H}$ and we shall do so without further notice. 
 
 Let us recall a consequence of Brumer's $p$-adic version of Baker's Theorem on the independence of logarithms of global units \cite{brumer}.

\begin{theorem}[Baker--Brumer]\label{thm:BB}
	The  map $\log_p : U_{H,\overline{\Q}} \to \overline{\Q}_p$ is injective.
\end{theorem}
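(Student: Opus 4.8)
The plan is to deduce the statement from Brumer's $p$-adic analogue of Baker's theorem on linear forms in $p$-adic logarithms of algebraic numbers, exactly as one deduces the classical statement about $\log_p$ on global units from Baker's theorem. First I would fix a $\Z$-basis $u_1,\dots,u_r$ of $\cO_H^\times$ modulo torsion, where $r = r_1(H) + r_2(H) - 1$ is the unit rank; then $U_{H,\overline{\Q}} = \cO_H^\times \otimes_{\Z} \overline{\Q}$ has the $u_i$ as an $\overline{\Q}$-basis (torsion dies after tensoring with a field of characteristic $0$). An element of the kernel of $\log_p$ is a relation $\sum_i \lambda_i \log_p(\iota_p(u_i)) = 0$ with $\lambda_i \in \overline{\Q}_p$, and I must show all $\lambda_i = 0$.

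The key step is Brumer's theorem: if $\alpha_1,\dots,\alpha_n$ are elements of $\overline{\Q}^\times$ whose $p$-adic logarithms $\log_p(\iota_p(\alpha_j))$ are $\overline{\Q}$-linearly independent, then they are $\overline{\Q}_p$-linearly independent (equivalently, $\overline{\Q}$-linear independence of the $\alpha_j$ in $\overline{\Q}^\times \otimes \Q$ together with no $\alpha_j$ being a root of unity forces $\overline{\Q}_p$-linear independence of the logs). Concretely, Brumer proves that if $\beta_1,\dots,\beta_n \in \overline{\Q}$ are linearly independent over $\Q$ and $\alpha_1,\dots,\alpha_n \in \overline{\Q}^\times$ lie in the domain of convergence of $\log_p$, then $\sum_j \beta_j \log_p(\alpha_j) = 0$ implies all $\beta_j = 0$; combined with a descent from $\overline{\Q}_p$-coefficients to $\overline{\Q}$-coefficients (choose a $\overline{\Q}$-basis of the $\overline{\Q}$-span of the $\lambda_i$ inside $\overline{\Q}_p$ and separate coordinates), this yields $\overline{\Q}_p$-linear independence. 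So it suffices to check that $\log_p(\iota_p(u_1)),\dots,\log_p(\iota_p(u_r))$ are linearly independent over $\overline{\Q}$, i.e. over $\Q$. That in turn follows from the nonvanishing of the $p$-adic regulator of $H$: the $r \times r$ matrix $(\log_p \iota_{p}(\sigma u_i))_{\sigma, i}$, where $\sigma$ ranges over a suitable set of embeddings, has nonzero determinant — but this is precisely Brumer's theorem again (Leopoldt's conjecture for abelian fields, proved by Brumer). Since $H/\Q$ is abelian over its Galois closure is not needed; $H$ is a number field with $G = \Gal(H/\Q)$ a finite group, and here one invokes that $H$ is contained in a cyclotomic-by-abelian tower only if needed — in fact $H$ is the splitting field of an Artin representation and need not be abelian, so I would instead invoke Brumer's theorem in the form that directly gives injectivity of $\log_p$ on $\cO_H^\times \otimes \overline{\Q}$ for an arbitrary number field $H$, which is its standard packaging.

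The main obstacle is purely expository: one must be careful that the two uses of "Baker–Brumer" (independence of the logarithms of a fixed basis of units, and the descent from $\overline{\Q}_p$- to $\overline{\Q}$-coefficients) are both legitimately covered by \cite{brumer}, and that the Iwasawa branch of $\log_p$ chosen in \textsection\ref{subsec:log_p} is the one under which Brumer's non-vanishing statement holds (it is: Brumer works with the Iwasawa logarithm, normalized by $\log_p(p) = 0$, which is exactly the determination fixed above). No genuine mathematical difficulty remains once the correct citation is in place; the proof is essentially a two-line reduction:
\begin{equation*}
\log_p\Big(\textstyle\sum_i \lambda_i \otimes u_i\Big) = 0 \;\Longrightarrow\; \sum_i \lambda_i \log_p(\iota_p u_i) = 0 \;\overset{\text{Brumer}}{\Longrightarrow}\; \lambda_i = 0 \ \ \forall i.
\end{equation*}
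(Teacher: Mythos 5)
Your reduction to Brumer's theorem is the right idea (the paper itself offers no argument beyond citing \cite{brumer}), but two steps in your write-up are genuinely wrong. First, you misplace the coefficients: an element of the kernel lives in $U_{H,\overline{\Q}}=\cO_H^\times\otimes_{\Z}\overline{\Q}$, so the relation to be refuted is $\sum_i\lambda_i\log_p(\iota_p(u_i))=0$ with $\lambda_i\in\overline{\Q}$, not $\overline{\Q}_p$. The strengthening you then aim for, ``$\overline{\Q}_p$-linear independence of the logs,'' is vacuous-false: the numbers $\log_p(\iota_p(u_i))$ are scalars in $\overline{\Q}_p$, so any two of them are $\overline{\Q}_p$-linearly dependent, and the proposed descent (expand the $\lambda_i$ in a $\overline{\Q}$-basis of their span and ``separate coordinates'') is invalid because the logarithms themselves lie in $\overline{\Q}_p$; inside the single field $\overline{\Q}_p$ there is no decomposition letting you equate coefficients. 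Indeed, injectivity of $\log_p$ on $U_H=\cO_H^\times\otimes_{\Z}\overline{\Q}_p$ fails as soon as the unit rank exceeds $1$, which is exactly why the paper works with the $\overline{\Q}$-structure here (compare the nontrivial subspace $W^\circ$ in Lemma~\ref{rho-Leo}). The statement with $\overline{\Q}$-coefficients is precisely the conclusion of Brumer's theorem, so no upgrade is needed.

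Second, your justification of the $\Q$-linear independence of $\log_p(\iota_p(u_1)),\dots,\log_p(\iota_p(u_r))$ appeals to the nonvanishing of the $p$-adic regulator of $H$, i.e.\ to Leopoldt's conjecture; but Brumer proved that only for fields abelian over $\Q$ (or an imaginary quadratic field), and the field $H$ cut out by an Artin representation is typically non-abelian, so that step is unavailable --- and also unnecessary. A relation $\sum_i a_i\log_p(\iota_p(u_i))=0$ with $a_i\in\Z$ not all zero would give $\log_p(\iota_p(u))=0$ for $u=\prod_i u_i^{a_i}$; since the kernel of the Iwasawa logarithm on the units of the completion at the single place determined by $\iota_p$ consists of roots of unity, $u$ is torsion, contradicting that the $u_i$ form a basis of $\cO_H^\times$ modulo torsion. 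With this elementary step, Brumer's theorem ($\Q$-linearly independent logarithms of algebraic numbers are $\overline{\Q}$-linearly independent) yields the theorem exactly as stated; your final fallback of citing Brumer ``in the form that directly gives injectivity on $\cO_H^\times\otimes\overline{\Q}$'' is then not a proof but a restatement.
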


By the  $G_p$-equivariance of the  $p$-adic logarithm and  Frobenius reciprocity,  we have: 
\begin{equation} \label{eq:frob-rec}
\Hom_{G_p}(W,\cO_E^{1}\otimes_{\Z_p } \overline{\Q}_p)\underset{\sim}{\xrightarrow{\log_p}}
\Hom_{G_p}(W,E\otimes_{\Q_p } \overline{\Q}_p)\xrightarrow{\sim} \Hom(W,\overline{\Q}_p).
\end{equation} 
Composing  this isomorphism with  $\loc_p$ from Lemma~\ref{lem:BK_desc}(iii) yields  a map
\[\log_p\circ  \loc_p:
\Hom_{G}(W,U_H)\to \Hom_{G_p}(W,\cO_E^{1}\otimes_{\Z_p } \overline{\Q}_p)\xrightarrow{\sim} \Hom(W,\overline{\Q}_p),\]
whose injectivity is the content of the $\varrho$-part of the Leopoldt Conjecture.  

Note that the Leopoldt Conjecture  for $H$ at $p$, 
asserting the injectivity of $\widehat{\cO_H^\times} \otimes_{\Z_p} \Q_p \to \widehat{(\cO_H\otimes\Z_p)^\times}\otimes_{\Z_p} \Q_p$,  is equivalent to its $\varrho$-parts  for  
$\varrho$ running over all  irreducible representations of $G$ contributing to $U_H$, justifying the  terminology.

\begin{lemma} \label{rho-Leo}
The following are equivalent: 
\begin{enumerate}[(i)]
\item the $\varrho$-part of the Leopoldt Conjecture holds; 
\item $\loc_p : \rH^1_f(V) \to \rH^1_f(\Q_p ,V)$ is injective; 
\item $\dim W^\circ=d^-$, where	 $ W^\circ=\left\{ w\in W\, \mid \, \log_p (\kappa(w))=0 \text{ for all } \kappa\in \Hom_G(W, U_H)\right\}$. 	 
\end{enumerate} 
\end{lemma}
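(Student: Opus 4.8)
The plan is to prove the equivalence by establishing (i) $\Leftrightarrow$ (ii) directly from the description of $\loc_p$ in Lemma~\ref{lem:BK_desc}, and then (ii) $\Leftrightarrow$ (iii) by unwinding the composite $\log_p \circ \loc_p$ through Frobenius reciprocity. First I would recall that, by definition, the $\varrho$-part of the Leopoldt Conjecture is exactly the assertion that the map $\log_p \circ \loc_p : \Hom_G(W, U_H) \to \Hom(W, \overline{\Q}_p)$ is injective, which is stated in the paragraph just before the lemma. Since $\log_p$ induces an isomorphism $\Hom_{G_p}(W, \cO_E^1 \otimes_{\Z_p} \overline{\Q}_p) \xrightarrow{\sim} \Hom(W, \overline{\Q}_p)$ by \eqref{eq:frob-rec}, the injectivity of $\log_p \circ \loc_p$ is equivalent to the injectivity of $\loc_p : \rH^1_f(V) \to \rH^1_f(\Q_p, V)$ itself (using the identifications of Lemma~\ref{lem:BK_desc}(ii) and (i)). This gives (i) $\Leftrightarrow$ (ii) essentially for free.

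For (i) $\Leftrightarrow$ (iii), I would compute the kernel of $\log_p \circ \loc_p$. An element $\kappa \in \Hom_G(W, U_H)$ lies in this kernel iff $\log_p(\kappa(w)) = 0$ for all $w \in W$, i.e. iff $\kappa$ factors through the subspace $W^\circ$ defined in (iii). So the kernel of $\log_p \circ \loc_p$ is canonically $\Hom_G(W, U_H)$ restricted to those homomorphisms killing a $G$-complement... more precisely, $\ker(\log_p \circ \loc_p) \cong \Hom_G(W/W^{\circ\perp}, U_H)$ or, dually, is nonzero precisely when there is a nonzero $G$-equivariant map $W \to U_H$ with image in the units whose logarithm vanishes. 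The key linear-algebra point is that $W^\circ$ is a $G$-subrepresentation of $W$ (since both $G$ acts on $\Hom_G(W,U_H)$ trivially in the relevant sense and $\log_p$ is $G_p$-equivariant — one needs that the defining condition is $G$-stable, which follows because $\Hom_G(W,U_H)$-valued conditions are automatically Galois-stable). Then injectivity of $\log_p \circ \loc_p$ holds iff $W^\circ$ contains no subrepresentation on which some unit-homomorphism is supported, and a dimension count via Dirichlet/Baker--Brumer (Theorem~\ref{thm:BB}) forces $\dim W^\circ = d^-$ exactly when injectivity holds: by Baker--Brumer the logarithms of global units are independent, so $W^\circ$ is precisely the ``defect'' space, and $\dim \rH^1_f(V) = d^+$ from \textsection\ref{sec:dim} while $\rH^1_f(\Q_p,V)$ also has the expected dimension, so the localization map is injective iff no dimension is lost iff $W^\circ$ is as small as possible, namely $d^-$.

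The main obstacle will be pinning down the exact relationship between $W^\circ$ and $\ker(\log_p \circ \loc_p)$ at the level of dimensions: one must show that $\dim \ker(\log_p \circ \loc_p) = \dim \Hom_G(W^\circ, U_H)$ — wait, more carefully, that the failure of injectivity is governed by whether $W^\circ \supsetneq W^{\circ}_{\min}$ where $W^\circ_{\min}$ is the span of the $(-1)$-eigenspaces that automatically die. Concretely: complex conjugation $\tau$ acts on $U_H$ with $\log_p$ vanishing on the minus part by a standard argument (the $p$-units contribute, but for units $\log_p$ composed with the archimedean structure...), so the $d^-$-dimensional piece always lies in $W^\circ$; hence $\dim W^\circ \geqslant d^-$ always, and equality is the sharp statement equivalent to Leopoldt. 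I would make this precise by noting $\Hom_G(W, U_H)$ has dimension $d^+$ (Dirichlet), that $\log_p \circ \loc_p$ lands in $\Hom_G(W/W^\circ, \overline{\Q}_p)$, and that by Baker--Brumer the induced map $\Hom_G(W,U_H) \to \Hom(W/W^\circ, \overline{\Q}_p)$ is injective — so injectivity of $\log_p \circ \loc_p$ on all of $\Hom(W,\overline{\Q}_p)$ is equivalent to $W^\circ$ being no larger than the unavoidable $d^-$-dimensional part, i.e. $\dim W^\circ = d^-$. Assembling these three equivalences completes the proof.
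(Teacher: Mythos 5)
Your reduction of (i) $\Leftrightarrow$ (ii) is correct: the paper defines the $\varrho$-part of the Leopoldt Conjecture as the injectivity of $\log_p\circ\loc_p$, and since the second arrow in \eqref{eq:frob-rec} is an isomorphism this is the same as the injectivity of $\loc_p$ under the identifications of Lemma~\ref{lem:BK_desc}. The passage to (iii), however, contains genuine errors. First, $W^\circ$ is not a $G$-subrepresentation of $W$: the linear forms $w\mapsto\log_p(\kappa(w))$ involve $\log_p\circ\iota_p$, which singles out one place above $p$ and is not Galois-equivariant; the paper warns immediately after the lemma that $W^\circ$ is in general not even $G_p$-stable (fortunately this claim is not needed). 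Second, and more seriously, your appeal to Baker--Brumer is circular: since every form $\log_p\circ\kappa$ vanishes on $W^\circ$ by the very definition of $W^\circ$, the induced map $\Hom_G(W,U_H)\to\Hom(W/W^\circ,\overline{\Q}_p)$ has exactly the same kernel as $\log_p\circ\loc_p$, so asserting its injectivity is asserting statement (i) itself. Theorem~\ref{thm:BB} concerns $\log_p$ on the $\overline{\Q}$-rational units $U_{H,\overline{\Q}}$ and yields the $\varrho$-part of Leopoldt only when $d^+\leqslant 1$, as the paper points out. Third, the claim that the $\tau$-minus part of $W$ automatically lies in $W^\circ$ because ``$\log_p$ vanishes on the minus part of $U_H$'' is unsubstantiated: $\log_p\circ\iota_p$ has no compatibility with the complex conjugation determined by $\iota_\infty$.

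What is actually needed for (ii) $\Leftrightarrow$ (iii) is only elementary linear algebra. Consider the bilinear pairing $\Hom_G(W,U_H)\times W\to\overline{\Q}_p$ given by $(\kappa,w)\mapsto\log_p(\kappa(w))$. Its left kernel is $\ker(\log_p\circ\loc_p)$ and its right kernel is $W^\circ$, and the induced pairing on the two quotients is non-degenerate, so
\[ \dim\Hom_G(W,U_H)-\dim\ker(\log_p\circ\loc_p)=\dim W-\dim W^\circ. \]
Since $\dim\Hom_G(W,U_H)=d^+$ by Dirichlet (\textsection\ref{sec:dim}) and $\dim W=d^++d^-$, this reads $\dim W^\circ=d^-+\dim\ker(\log_p\circ\loc_p)$; in particular $\dim W^\circ\geqslant d^-$ always, and equality holds precisely when $\log_p\circ\loc_p$ (equivalently $\loc_p$) is injective. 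This yields the equivalence with no transcendence input and no Galois stability of $W^\circ$, and is the argument your write-up should be replaced by.
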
	 
Note that,  in general, the $\overline{\Q}_p$-linear subspace  $ W^\circ$ of $W$ is \emph{not} $G_p$-stable. 

By  Theorem~\ref{thm:BB} the $\varrho$-part of the Leopoldt Conjecture holds when $d^+\leqslant 1$. 
	 
\subsection{Regular submodules and $p$-refinements}
Let  $\chi : \rG_{\Q_p } \twoheadrightarrow \Gal(\Q_p (\mu_{p^\infty})/\Q_p ) \simeq \Z^\times_p \subseteq \bar\Q^\times_p$  be the $p$-adic cyclotomic character and  let us fix an arithmetic Frobenius element $\Frob_p$.

As $\varrho$ is unramified at $p$,  its restriction to $\rG_{\Q_p }$ is a direct sum of unramified finite order  characters $\chi_1,\ldots,\chi_d$, so that  one has $V_{\vert \rG_{\Q_p }}\simeq \chi\cdot\chi_1^{-1}\oplus \ldots \oplus \chi\cdot\chi_d^{-1}$. 
 
\subsubsection{The Bloch-Kato logarithm}\label{sec:the_BK_logarithm} 
It follows from the  definition of $B_\crys$ (see \cite{fontaine1982}) that it contains an element $t$ such that 
$\left(\Q_p (1) \otimes B_\crys \right)^{\rG_E}=t^{-1}E$ as $G_p$-modules, hence 
\[D_\crys(V)=\left(V \otimes_{\Q_p } B_\crys \right)^{\rG_{\Q_p }}=\Hom_{G_p}\left(W,t^{-1} E \otimes_{\Q_p } \overline{\Q}_p\right)
\overset{\mathrm{\cdot t }}{=\joinrel=}
\Hom(W, \overline{\Q}_p)
.\]
Moreover, the crystalline Frobenius $\varphi$ acts semi-simply on $D_\crys(V)$ with eigenvalues $(p^{-1}\alpha_i)_{1\leqslant i\leqslant d}$, where $\alpha_i=\chi_i(\Frob_p) \in \overline{\Q}^\times_p$.

Using Kummer theory, the  Bloch--Kato logarithm (see \cite[\textsection3]{bloch-kato}) for the $\rG_E$-module $\overline{\Q}_p(1)$ is simply given by: 
\[\log_p : \cO_E^{1} \otimes_{\Z_p } \overline{\Q}_p\overset{\sim}{\longrightarrow} E \otimes_{\Q_p } \overline{\Q}_p\quad x\otimes y\mapsto \log_p(x)\otimes y.\]
As  $V$ is crystalline and $\Fil^0D_\dR(V)=\{0\}$, the $p$-adic tangent space of $V$ is given by  
\[t_V=D_\dR(V)/\Fil^0D_\dR(V)=D_\crys(V).\]
 Using the identifications of \eqref{eq:frob-rec} and 
Lemma~\ref{lem:BK_desc}(i), the Bloch--Kato logarithm $\log_{\BK}$ for $V_{\vert \rG_{\Q_p }}$ fits in  the following commutative diagram
\begin{equation}\label{eq:log-BK}
\xymatrix{
\rH^1_f(\Q_p , V) \ar[rr]^{\log_\BK}_\sim \ar@{=}[d] &  &D_\crys(V) \ar@{=}[d] \\
\Hom_{G_p}(W,\cO_E^{1}\otimes_{\Z_p } \overline{\Q}_p) \ar[rr]^{\quad\log_p}_{\quad\sim} & &\Hom(W, \overline{\Q}_p).}
\end{equation}

Using the identification of Lemma~\ref{lem:BK_desc}(ii),
the $p$-adic regulator is the map $\log_\BK \circ \loc_p$ fitting in   the following  commutative diagram
\begin{equation}\label{eq:loc-log}
\xymatrix{
	\rH^1_f(V) \ar[rr]^{\log_\BK \circ \loc_p} \ar@{=}[d] & & D_\crys(V) \ar@{=}[d] \\
	\Hom_{G}\left(W,U_H\right) \ar[rr]^{\quad\log_p\circ \iota_p} & &\Hom(W, \overline{\Q}_p).}
\end{equation}

Note that $\rH^1_f(V)$ and $\rH^1_{f,p}(V)$ are endowed with canonical $\overline{\Q}$-structures given respectively by 
$\Hom_{G}\left(W_{\overline{\Q}},U_{H, \overline{\Q}}\right)$ and $\Hom_{G}\left(W_{\overline{\Q}},U^{(p)}_{H, \overline{\Q}}\right)$.

\subsubsection{Regular submodules} According to Perrin-Riou, the conjectural $p$-adic $L$-function of a motive should depend on the choice of local data at $p$, namely a regular submodule (see \cite[\textsection3.1.2]{perrin-riou:Lp} and \cite[\textsection4.1.3]{benoiscrys}).

\begin{definition}\label{def:module_reg}
	A submodule $D$ of $D_\crys(V)$, stable by $\varphi$, is regular if the map \[r_D : \rH^1_f( V ) \longrightarrow D_\crys(V)/D\]
	induced by the $p$-adic regulator map $\log_\BK \circ \loc_p$ is an isomorphism. 
\end{definition}
\begin{remark}
	
The existence of a regular submodule of $D_\crys(V)$ implies, in particular, that the localization map $\loc_p$ of Lemma~\ref{lem:BK_desc}(iii) is injective.
	
\end{remark}

\subsubsection{$p$-refinements} \label{sec:refinements}
Using \eqref{eq:log-BK}, any submodule $D$ of $D_\crys(V)$, stable by $\varphi$, can be written as 
\begin{equation}\label{eq:module_reg}
D=\Hom(W/W^+,\overline{\Q}_p), 
\end{equation}
for some $G_p$-stable linear subspace $W^+$ of $W$. Note that the action of $\varphi$ on $D$ corresponds, under (\ref{eq:module_reg}), to the action of $p^{-1}\cdot \Frob^{-1}_p$ on $\Hom(W/W^+,\overline{\Q}_p)$. Moreover, if $D$ is regular (see Def.~\ref{def:module_reg}) then $W^+$ has to be of dimension $d^+$,  motivating the following definition. 

\begin{definition}\label{def:p-refinement}
\begin{enumerate}[(i)]
\item 	A  \emph{$p$-refinement} of $W$ is a $d^+$-dimensional $G_p$-stable linear subspace $W^+$ of $W$. 
\item A $p$-refinement $W^+$ is \emph{regular} if  the submodule $D$ given by \eqref{eq:module_reg} is regular in the sense of Definition~\ref{def:module_reg}.
\item A $p$-refinement $W^+$ is  \emph{motivic} if there exists a $\overline{\Q}$-linear subspace $W^+_{\overline{\Q}}$ of $W_{\overline{\Q}}$ such that $W^+= W_{\overline{\Q}}^+\otimes_{\overline{\Q}} \overline{\Q}_p$. 
\item 	
	Given  any $p$-refinement $W^+$ one can attach a $p$-adic regulator of size $d^+$ as follows. Choose a basis $(\kappa_1,\ldots,\kappa_{d^+})$ of $\Hom_G(W,U_H)$ and a basis $(w^+_1,\ldots,w^+_{d^+})$ of $W^+$, and put 
	\[\Reg_p(W,W^+) = \det\left(\log_p \left(\kappa_j(w^+_i)\right)\right)_{1\leqslant i,j \leqslant d^+} \in \overline{\Q}_p. \]
	When $W^+$ is motivic one can use $\overline{\Q}$-bases of $\rH^1_f(V)_{\overline{\Q}}= \Hom_{G}\left(W_{\overline{\Q}},U_{H, \overline{\Q}}\right)$ 
	and $W^+_{\overline{\Q}}$.  	 
\end{enumerate}
\end{definition}

Recall the subspace 
$W^\circ= \bigcap_{i=1}^{d^+} \ker \left(\log_p \circ\kappa_i\right)$ from Lemma~\ref{rho-Leo}.

\begin{proposition} \label{prop:D_est_regulier}
\begin{enumerate}[(i)]
\item If $W$ admits a regular $p$-refinement then the $\varrho$-part of the Leopoldt Conjecture holds
and $\dim W^\circ =d^-$.  
\item  A $p$-refinement $W^+$ of $W$ is regular if any (hence all) of the following holds:
\begin{itemize}
\item $\Reg_p(W,W^+)\ne 0$, 
\item $\theta_D\ \colon \ \Hom_{G}\left(W,U_H\right)\to \Hom_{G_p}\left(W^+,\cO_E^{1}\otimes_{\Z_p } \overline{\Q}_p\right)$
is an isomorphism, 
\item $W=W^+\oplus W^\circ$.  
\end{itemize}
\item 
A motivic $p$-refinement is always regular if $d^+=1$,  or if we assume the Weak $p$-adic Schanuel Conjecture \cite[Conj.~3.10]{mazur-calegari}.
\end{enumerate}
\end{proposition}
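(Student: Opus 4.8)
The plan is to prove Proposition~\ref{prop:D_est_regulier} by reducing each assertion to the linear-algebraic content of the maps $\loc_p$, $\log_\BK$, and the Baker--Brumer injectivity theorem. Throughout, write $r = d^+$ for the common dimension of $\rH^1_f(V)$ and of a $p$-refinement $W^+$, so $r_D$ in Definition~\ref{def:module_reg} is a map between two $\overline{\Q}_p$-vector spaces of dimension $r$ (using $\dim D_\crys(V) = d$ and $\dim D = \dim \Hom(W/W^+,\overline{\Q}_p) = d - r$), hence $D$ is regular if and only if $r_D$ is injective, if and only if it is surjective.

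For part (i): assume some $p$-refinement $W^+$ is regular. First I would observe that $r_D = \pi_D \circ \log_\BK \circ \loc_p$ where $\pi_D : D_\crys(V) \to D_\crys(V)/D$ is the projection; injectivity of $r_D$ forces injectivity of $\loc_p : \rH^1_f(V) \to \rH^1_f(\Q_p,V)$, which by Lemma~\ref{rho-Leo} is exactly the $\varrho$-part of the Leopoldt Conjecture, and also gives $\dim W^\circ = d^-$ by the same lemma. Under the identifications \eqref{eq:loc-log} and \eqref{eq:module_reg}, the composite $r_D$ becomes the map $\Hom_G(W,U_H) \to \Hom(W^+,\overline{\Q}_p)$ sending $\kappa$ to $(w \mapsto \log_p(\kappa(w)))$ restricted to $W^+$; call this $\theta_D$. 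This reinterpretation is the technical heart, and it is where I expect the bookkeeping to require care: one must chase through the identification $D_\crys(V)/D = \Hom(W^+,\overline{\Q}_p)$ induced by $W = W^+ \oplus (W/W^+)$-type splittings and check the $\varphi$-equivariance remark, but no deep input is needed.

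For part (ii): the three bulleted conditions are all literal restatements of ``$\theta_D$ is an isomorphism.'' Choosing the bases $(\kappa_j)$ and $(w_i^+)$ as in Definition~\ref{def:p-refinement}(iv), the matrix of $\theta_D$ is precisely $(\log_p(\kappa_j(w_i^+)))_{i,j}$, whose determinant is $\Reg_p(W,W^+)$; so nonvanishing of the regulator $\Leftrightarrow$ $\theta_D$ isomorphism. For the third bullet, I would argue that $\theta_D$ is an isomorphism iff $\ker\theta_D = 0$, and unwinding the definition of $W^\circ = \bigcap_i \ker(\log_p\circ\kappa_i)$ shows $\theta_D(\kappa) = 0$ for all $\kappa$ means $W^+ \subseteq W^\circ$; combined with the dimension count $\dim W^+ = d^+$ and (granting Leopoldt, which part (i) shows is automatic once a regular refinement exists, but here we argue directly) $\dim W^\circ = d^-$ and $\dim W = d^+ + d^-$, one gets that $\theta_D$ injective $\Leftrightarrow$ $W^+ \cap W^\circ = 0$ $\Leftrightarrow$ $W = W^+ \oplus W^\circ$. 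A small subtlety is that $W^\circ$ need not be $G_p$-stable (as flagged after Lemma~\ref{rho-Leo}), so ``$W = W^+ \oplus W^\circ$'' is an equality of plain subspaces only; I should state the equivalences carefully so that Leopoldt is not silently assumed in the ``only if'' directions but appears where genuinely needed.

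For part (iii): when $d^+ = 1$, a motivic refinement $W^+ = W^+_{\overline{\Q}} \otimes \overline{\Q}_p$ is spanned by a single $w^+ \in W_{\overline{\Q}}$, and $\Reg_p(W,W^+) = \log_p(\kappa_1(w^+))$ for $0 \neq \kappa_1 \in \Hom_G(W_{\overline{\Q}}, U_{H,\overline{\Q}})$; since $\kappa_1(w^+) \in U_{H,\overline{\Q}}$ is nonzero (as $\kappa_1 \neq 0$ and $w^+ \neq 0$ and $\kappa_1$ is $G$-equivariant with $W$ irreducible enough — more carefully, $\kappa_1(w^+) \neq 0$ because otherwise $\kappa_1$ vanishes on the $G$-span of $w^+$), Theorem~\ref{thm:BB} (Baker--Brumer) gives $\log_p(\kappa_1(w^+)) \neq 0$, so $W^+$ is regular by part (ii). For general $d^+$, the Weak $p$-adic Schanuel Conjecture \cite[Conj.~3.10]{mazur-calegari} predicts algebraic independence over $\overline{\Q}$ of $p$-adic logarithms of multiplicatively independent $p$-units, which in particular forces the determinant $\Reg_p(W,W^+)$ — an explicit algebraic combination of such logarithms coming from a $\overline{\Q}$-basis — to be nonzero; invoking part (ii) again concludes. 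The main obstacle across the whole argument is not any single hard step but ensuring the chain of canonical identifications in \eqref{eq:log-BK}, \eqref{eq:loc-log}, \eqref{eq:module_reg} is deployed consistently so that $r_D$, $\theta_D$, and the matrix $(\log_p(\kappa_j(w_i^+)))$ are manifestly the same object; once that dictionary is fixed, (i)--(iii) are short.
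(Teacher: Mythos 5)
Your proposal is correct and follows essentially the same route as the paper: identify $r_D$ with the map $\Hom_G(W,U_H)\to\Hom(W^+,\overline{\Q}_p)$ whose matrix is the regulator matrix, deduce (i) from Lemma~\ref{rho-Leo} via injectivity of $\loc_p$, handle the third bullet of (ii) through $W^\circ$ together with the dimension count where Leopoldt enters, and prove (iii) by Baker--Brumer when $d^+=1$ and by the Weak $p$-adic Schanuel Conjecture in general, using that the entries come from the $\overline{\Q}$-structure. The only cosmetic wobble is the phrase about ``$\theta_D(\kappa)=0$ for all $\kappa$'' in (ii), but your subsequent pairing/dimension argument gives the correct equivalence, matching the paper's factorization through $\Hom(W/W^\circ,\overline{\Q}_p)$.
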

\begin{proof} (i)  follows from  Lemma~\ref{rho-Leo}, as regularity implies the injectivity of $\loc_p$. 

(ii) By definition  $W^+$ is regular  if the map $r_D$,  for $D$ as in  \eqref{eq:module_reg}, 
is an isomorphism.
Using \eqref{eq:loc-log} and the canonically isomorphism between $D_\crys(V)/D$ and $\Hom(W^+,\overline{\Q}_p)$, we  are  lead to study the  map 
\begin{equation}\label{eq:reg-ref}
r_D: \Hom_G(W,U_H)  \longrightarrow \Hom(W^+,\overline{\Q}_p),
\end{equation}
 whose determinant, computed with respect to our fixed bases, is 
 precisely $\Reg_p(W,W^+)$. Thus, $r_D$ is an isomorphism if and only if $\Reg_p(W,W^+)\ne 0$. 
 
 As the map (\ref{eq:reg-ref}) is obtained by composing $\theta_D$ with the Frobenius reciprocity  isomorphism $\Hom_{G_p}(W^+,\cO^1_E \otimes_{\Z_p } \overline{\Q}_p) \simeq \Hom(W^+,\overline{\Q}_p)$ induced by $\log_p$, it follows that  $\theta_D$ is an isomorphism if and only if $\Reg_p(W,W^+)\ne 0$.

As the map (\ref{eq:reg-ref}) factors through $ \Hom(W/W^\circ,\overline{\Q}_p)$, if $W^+$ is regular then necessarily 
$W^+\cap W^\circ=\{0\}$ and therefore $W^+\oplus W^\circ=W$ (as $\dim W^\circ =d^-$ by (i)). Conversely, if 
$W^+\oplus W^\circ=W$, then Lemma~\ref{rho-Leo} implies that $\loc_p$ is injective, hence $r_D$ is an isomorphism, 
i.e.  $W^+$ is regular. 

(iii) Suppose that $W^+$ is motivic. Then one can assume the basis $w_1^+, \ldots,w_{d^+}^+$ to be chosen in $W_{\overline{\Q}}$. Moreover the $\kappa_i$'s can be chosen in $\rH^1_f(V)_{\overline{\Q}}$, so the entries of the regulator matrix are $p$-adic logarithms of elements of $U_{H,\overline{\Q}}$ which are easily seen to be linearly independent over $\Q$. Thus $\Reg_p(W,W^+)\ne 0$ by Theorem~\ref{thm:BB} if $d^+=1$, or else if  the Weak $p$-adic Schanuel Conjecture holds, as the entries of the determinant are linearly  independent over $\Q$. 
\end{proof}

\begin{remark}
		If the characters $(\chi_i)_{1\leqslant i\leqslant d}$ are pairwise distinct, then  a choice of a $p$-refinement of $W$ amounts to a choice of $d^+$ amongst these characters, and such a $p$-refinement is automatically motivic.  However, when the characters are \emph{not} pairwise distinct, then there are  infinitely many $p$-refinements some of which  are not motivic.
\end{remark}

\subsubsection{Extra zeros}\label{sec:extra zeros}
Let  $W^+$  be a $p$-refinement of $W$ and put $W^-=W/W^+$. Let $W_1^-=\rH^0(\Q_p ,W^-)$ be the subspace of $W^-$ on which the Frobenius acts trivially and consider the (unique) $G_p$-stable decomposition 
\[W^-=W/W^+=W_1^-\oplus W_{-1}^-.\]
 We assume that $\Reg_p(W,W^+)\ne 0$, so that $D=\Hom(W^-,\overline{\Q}_p)$ is a regular submodule of $D_\crys(V)$ by Proposition~\ref{prop:D_est_regulier}. The resulting decomposition of the $\varphi$-module $D$ as a direct sum of 
\begin{equation} \label{eq:D1}
D_1=\Hom(W_1^-,\overline{\Q}_p)  \text{ and } D_{-1}=\Hom(W_{-1}^-,\overline{\Q}_p)  
\end{equation}
   coincides with the one introduced in \cite[\textsection2.1.4]{benois:L-invariant} and \cite[\textsection4.1.3]{benoiscrys} in the sense that $D_1=D^{\varphi=p^{-1}}$ and $D_{-1}=(1-p^{-1}\varphi^{-1})D$.  As explained in  \emph{loc. cit.}, the $p$-adic $L$-function attached to (lattices of) $V$ and $D$ should have $e$ extra zeros at $s=0$, where $e=\dim D_1=\dim W_1^-$.

\subsection{The $\sL$-invariant} \label{sec:construction_L_invariant}
We now unwind Benois' definition of the $\sL$-invariant in the setting of Artin motives. Henceforth, we fix a 
regular submodule $D$ of $D_\crys(V)$, stable by $\varphi$, and we let 
$W^+$ denote the corresponding regular $p$-refinement  of $W$ (see \eqref{eq:module_reg}). 

\subsubsection{$(\varphi,\Gamma)$-modules} 
Benois' definition of the $\sL$-invariant involves the $(\varphi,\Gamma)$-modules 
$\textbf{D}$, $\textbf{D}_1$ and  $\textbf{D}_{-1}$, associated by Berger's theory \cite{berger2008} with $D$, $D_1$ and $D_{-1}$ (see \eqref{eq:D1}).  In our setting, these $(\varphi,\Gamma)$-modules come from local Galois representations, so their cohomology (in the sense of Herr \cite{herr1998}) is easily computable by \cite[Corollary 3]{benoiscrys}. From Lemma~\ref{lem:BK_desc}(i) one obtains:
\begin{align*}
\rH^1(\textbf{D}) &=\rH^1(\Q_p ,V^-) = \Hom_{G_p}\left(W^-, \widehat{E^{\times}} \otimes_{\Z_p } \overline{\Q}_p \right),\\
\rH_f^1(\textbf{D}) &=\rH_f^1(\Q_p ,V^-) = \Hom_{G_p}\left(W^-, \cO_E^{1} \otimes_{\Z_p } \overline{\Q}_p \right), \\
\rH_f^1(\textbf{D}_{-1})&=\rH^1(\textbf{D}_{-1}) =\rH^1(\Q_p ,V_{-1}^-) = \Hom_{G_p}\left(W_{-1}^-, \widehat{E^{\times}} \otimes_{\Z_p } \overline{\Q}_p \right),\\
\rH^1(\textbf{D}_1) &=\rH^1(\Q_p ,V_1^-) = \Hom_{G_p}\left(W_1^-, \widehat{E^{\times}} \otimes_{\Z_p } \overline{\Q}_p \right),\\
\rH_f^1(\textbf{D}_1)& =\rH_f^1(\Q_p ,V_1^-) = \Hom_{G_p}\left(W_1^-, \cO_E^{1} \otimes_{\Z_p } \overline{\Q}_p \right), 
\end{align*}
where  $V^{\pm}=(W^{\pm})^*(1)$ and where $V_1^-$ and $V_{-1}^-$ are similarly defined. 

As $\widehat{E^{\times}} = \cO_E^{1} \times p^{\Z_p }$, the map $ U_H^{(p)} \to \widehat{E^{\times}}\otimes \overline{\Q}_p$ induced by $\iota_p$ gives rise to a map
\footnotesize
	\begin{equation*}
	j_D : \Hom_G(W,U_H^{(p)}) \to\dfrac{\Hom_{G_p}(W,\widehat{E^{\times}}\otimes \overline{\Q}_p)}{\Hom_{G_p}(W^-,\cO_E^1 \otimes \overline{\Q}_p)} = \Hom_{G_p}\left(W^+,\cO_E^{1}\otimes_{\Z_p } \overline{\Q}_p\right)\oplus \Hom_{G_p}\left(W,p^{\overline{\Q}_p}\right),
\end{equation*}\normalsize
which can be seen to coincide with the map $\kappa_D$ in \cite[\textsection4.1.3]{benoiscrys}.

\begin{lemma}\label{lem:kappaD}
The map $j_D$ is an isomorphism. 
\end{lemma}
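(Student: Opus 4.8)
The plan is to prove Lemma~\ref{lem:kappaD} by a dimension count combined with a verification of injectivity, exploiting the explicit descriptions already assembled.

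\medskip

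\noindent\textbf{Setting up the target.} First I would record the dimensions of source and target. By \textsection\ref{sec:dim}, $\dim \rH^1_{f,p}(V)=\Hom_G(W,U_H^{(p)})$ has dimension $d^++f$, where $f=\dim\rH^0(\Q_p,W)$. For the target: since $W^+$ is a regular $p$-refinement, Proposition~\ref{prop:D_est_regulier}(ii) tells us that $\theta_D\colon \Hom_G(W,U_H)\to \Hom_{G_p}(W^+,\cO_E^1\otimes\overline{\Q}_p)$ is an isomorphism, so the first summand $\Hom_{G_p}(W^+,\cO_E^1\otimes_{\Z_p}\overline{\Q}_p)$ has dimension $d^+$. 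For the second summand, $\Hom_{G_p}(W,p^{\overline{\Q}_p})=\Hom_{G_p}(W,\overline{\Q}_p)$ (trivial $G_p$-action on $p^{\overline{\Q}_p}\cong\overline{\Q}_p$) has dimension equal to $\dim\rH^0(\Q_p,W)=f$. Hence source and target both have dimension $d^++f$, and it suffices to prove that $j_D$ is injective (or, equivalently, surjective).

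\medskip

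\noindent\textbf{Injectivity.} Suppose $\kappa\in\Hom_G(W,U_H^{(p)})$ lies in $\ker j_D$. Writing the composite with $\iota_p$ as a homomorphism $W\to\widehat{E^\times}\otimes\overline{\Q}_p=(\cO_E^1\times p^{\Z_p})\otimes\overline{\Q}_p$, the vanishing of the $\Hom_{G_p}(W,p^{\overline{\Q}_p})$-component says that $\ord_p\circ\loc_p(\kappa)=0$, i.e. $\loc_p(\kappa)$ has values in $\cO_E^1\otimes\overline{\Q}_p$; and the vanishing of the $\Hom_{G_p}(W^+,\cO_E^1\otimes\overline{\Q}_p)$-component says that $\loc_p(\kappa)|_{W^+}=0$ (once we already know it lands in $\cO_E^1\otimes\overline{\Q}_p$, the image of $W^-$ is killed in the quotient by construction, so the only extra condition is on $W^+$). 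The first condition forces $\kappa$ to land in $\Hom_G(W,U_H)$: indeed $\ord_p\colon U_H^{(p)}\to\overline{\Q}_p$ composed with the various embeddings $\iota_\ell$ measures valuations at places above $p$, and a $G$-equivariant $\kappa$ with $\ord_p\circ\loc_p(\kappa)=0$ has image with trivial valuation at \emph{every} place above $p$ (by $G$-conjugating $\iota_p$), hence image in $\cO_H^\times\otimes\overline{\Q}$, i.e. $\kappa\in\Hom_G(W,U_H)$. Then the second condition says $\theta_D(\kappa)=0$, and since $\theta_D$ is injective (Proposition~\ref{prop:D_est_regulier}(ii)), $\kappa=0$.

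\medskip

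\noindent\textbf{Remaining bookkeeping.} The only genuine care needed is in the identification of the target of $j_D$ with the displayed direct sum, i.e. in checking that
\[
\frac{\Hom_{G_p}(W,\widehat{E^\times}\otimes\overline{\Q}_p)}{\Hom_{G_p}(W^-,\cO_E^1\otimes\overline{\Q}_p)}\;\cong\;\Hom_{G_p}(W^+,\cO_E^1\otimes\overline{\Q}_p)\oplus\Hom_{G_p}(W,p^{\overline{\Q}_p}),
\]
which comes from the $G_p$-equivariant splitting $\widehat{E^\times}\otimes\overline{\Q}_p=(\cO_E^1\otimes\overline{\Q}_p)\oplus p^{\overline{\Q}_p}$ together with the short exact sequence $0\to W^+\to W\to W^-\to 0$ of $G_p$-modules (which splits, $\varrho|_{\rG_{\Q_p}}$ being semisimple): applying $\Hom_{G_p}(-,\cO_E^1\otimes\overline{\Q}_p)$ and taking the quotient by the image of $\Hom_{G_p}(W^-,-)$ leaves exactly $\Hom_{G_p}(W^+,\cO_E^1\otimes\overline{\Q}_p)$, while the $p^{\overline{\Q}_p}$-summand is unaffected since $\Hom_{G_p}(W^-,p^{\overline{\Q}_p})\subseteq\Hom_{G_p}(W,p^{\overline{\Q}_p})$ is \emph{not} quotiented out here (only the $\cO_E^1$-part of $W^-$ is). I expect the main obstacle to be precisely this last point—keeping straight exactly which subspace is being killed in the quotient defining $j_D$, so that the target really is the asserted direct sum and not something smaller; once that is nailed down, the dimension count plus the injectivity argument above close the proof.
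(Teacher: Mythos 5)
Your proof is correct and relies on the same key input as the paper's: regularity of $D$ makes $\theta_D\colon\Hom_G(W,U_H)\to\Hom_{G_p}(W^+,\cO_E^{1}\otimes_{\Z_p}\overline{\Q}_p)$ an isomorphism (Proposition~\ref{prop:D_est_regulier}(ii)), combined with the splitting $\widehat{E^\times}=\cO_E^{1}\times p^{\Z_p}$ and the fact that an element of $U_H^{(p)}$ with trivial valuation at all places above $p$ lies in $U_H$. The paper packages this more compactly via the Frobenius-reciprocity decomposition $\Hom_G(W,U_H^{(p)})=\Hom_G(W,U_H)\oplus\Hom_{G_p}(W,p^{\overline{\Q}_p})$, on which $j_D$ is triangular with diagonal blocks $\theta_D$ and the identity, whereas you arrange the same ingredients as injectivity plus a dimension count; apart from the harmless slip $\cO_H^\times\otimes\overline{\Q}$ where $\cO_H^\times\otimes\overline{\Q}_p$ (i.e. $U_H$) is meant, everything checks out.
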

\begin{proof}
 By Frobenius reciprocity one finds
\[\Hom_G(W,U_H^{(p)})=\Hom_G(W,U_H)\oplus \Hom_{G_p}(W,p^{\overline{\Q}_p}),\] and the restriction of $j_D$
on the first summand is given by the map $\theta_D$ from Proposition~\ref{prop:D_est_regulier}(ii). 
Thus,  the regularity of $D$ implies that $j_D$ is an isomorphism. 
\end{proof}

\subsubsection{The space $\rH^1(D,V)$}\label{sec:def_H1(D,V)} 
Let  $\rH^1(D,V)$ be  the  subspace of $\rH^1_{f,p}(V)=\Hom_G(W,U_H^{(p)})$ (see Lemma~\ref{lem:BK_desc}(ii)) 
consisting of $\kappa : W \to U_H^{(p)}$ such that $\iota_p\circ\kappa$ is trivial on $W^+$, i.e.
$\iota_p\circ\kappa\in 
\Hom_{G_p}(W^-,\widehat{E^{\times}}\otimes \overline{\Q}_p)$. 

We describe the isomorphisms $j_{D,f}$ and $j_{D,c}$ 
(denoted by $\varrho_{D,f}$ and $\varrho_{D,c}$ in \cite[\textsection4.1.4]{benoiscrys}) following  \cite[\textsection1.5.6]{benois:L-invariant}. As $j_D$ is an isomorphism, its restriction to $\rH^1(D,V)$  yields
\[ j_{D,c}: \rH^1(D,V) \underset{\sim}{\xrightarrow{j_D}} \Hom_{G_p}\left(W^-,p^{\overline{\Q}_p}\right)
\underset{\sim}{\xrightarrow{\ord_p}} \Hom\left(W_1^-,\overline{\Q}_p\right),\]
Thus  $\rH^1(D,V)$ has dimension $e$. Denoting by $\mathrm{res}_{W_1^-}$ the restriction to $W^-_1$, the map $j_{D,f}$ fits in the following commutative diagram:
\[ \xymatrix{ \rH^1(D,V)  \ar[rr]^{\hspace{-15mm}\mathrm{res}_{W_1^-}\circ \iota_p} \ar[rrrd]_ {j_{D,f}} & &
\Hom_{G_p}\left(W_1^-, \widehat{E^{\times}} \otimes_{\Z_p } \overline{\Q}_p \right) \ar[r] &
\Hom_{G_p}\left(W_1^-,  \cO_E^{1} \otimes_{\Z_p } \overline{\Q}_p \right)  \ar@{=}[d]^{-\log_p} \\
& & &\Hom\left(W_1^-,\overline{\Q}_p\right).}
\]
 
 The following definition is taken from \cite[\textsection4.1.4]{benoiscrys}. 
\begin{definition}
	Benois' $\sL$-invariant is defined as the following $e \times e$ determinant: \[\sL(V,D)=\det\left(j_{D,f} \circ j_{D,c}^{-1} \ \vert  \ D_1\right) \in \overline{\Q}_p.\]
\end{definition}

\subsection{Computation of the $\sL$-invariant}\label{sec:computation_L_invariant}
Benois' $\sL$-invariant $\sL(V,D)$ is described in the last paragraph in terms of an $e$-dimensional space $\rH^1(D,V)$ whose definition is not always practical. In this paragraph, we give an alternate formula for $\sL(V,D)$ involving a determinant of size $(d^++e)$ which turns out to be especially useful in the examples of \textsection\ref{sec:gross}, \textsection\ref{reg-general} and \textsection\ref{sec:ad_reg}.

\begin{lemma}\label{lem:def_psi'}
Let $W_1^{+}=\rH^0(\Q_p ,W^+)$. The   $\overline{\Q}_p$-linear map
	\[\Hom_{G}\left(W,U_H^{(p)}\right)  \longrightarrow \Hom_{G_p}(W^+,\widehat{E^\times} \otimes_{\Z_p } \overline{\Q}_p) \longrightarrow \Hom_{G_p}\left(W^{+}, \overline{\Q}_p \right)=\Hom \left(W_1^{+}, \overline{\Q}_p \right)\]
obtained by localizing, restricting to $W^+$ and then composing with $\ord_p$ is surjective. Its kernel contains $\Hom_{G}\left(W,U_H\right)$ and has dimension $d^++e$ over $\overline{\Q}_p$. 
\end{lemma}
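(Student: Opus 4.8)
The plan is to reduce the whole statement to a surjectivity (equivalently, rank) statement for an auxiliary map. Write $\psi$ for the composite in the statement and put $W_1=\rH^0(\Q_p,W)$, so $\dim W_1=f$. First I would replace the restriction to $W^+$ by no restriction at all: localizing and then composing with $\ord_p$ produces a map $\rho\colon\Hom_G(W,U_H^{(p)})\to\Hom_{G_p}(W,\widehat{E^{\times}}\otimes_{\Z_p}\overline{\Q}_p)\to\Hom_{G_p}(W,\overline{\Q}_p)=\Hom(W_1,\overline{\Q}_p)$, where, exactly as for $\rH^1_f(V)$ in Lemma~\ref{lem:BK_desc}(iii), the localization is induced by $\iota_p\colon U_H^{(p)}\to\widehat{E^{\times}}\otimes_{\Z_p}\overline{\Q}_p$. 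Now $\psi$ is $\rho$ followed by the restriction map $\Hom(W_1,\overline{\Q}_p)\twoheadrightarrow\Hom(W_1^+,\overline{\Q}_p)$, which is surjective because $W_1^+\subseteq W_1$; hence it suffices to prove that $\rho$ is surjective, after which the containment and the dimension count for $\ker\psi$ will follow by bookkeeping.

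The key step is to identify $\ker\rho$ with the Bloch--Kato Selmer group $\rH^1_f(V)$. A class $\kappa\in\rH^1_{f,p}(V)=\Hom_G(W,U_H^{(p)})$ lies in $\ker\rho$ precisely when $\ord_p\circ\iota_p\circ\kappa=0$, i.e.\ when $\iota_p\circ\kappa$ takes values in $\cO_E^{1}\otimes_{\Z_p}\overline{\Q}_p$, which by Lemma~\ref{lem:BK_desc}(i) says that $\loc_p(\kappa)$ lies in $\rH^1_f(\Q_p,V)$; combined with the conditions at the places $v\ne p$ that are already built into $\rH^1_{f,p}(V)$, this is exactly the assertion $\kappa\in\rH^1_f(V)$. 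Using $\dim\rH^1_f(V)=d^+$ and $\dim\rH^1_{f,p}(V)=d^++f$ from \textsection\ref{sec:dim}, the rank of $\rho$ equals $(d^++f)-d^+=f=\dim\Hom(W_1,\overline{\Q}_p)$, so $\rho$, and hence $\psi$, is surjective. In particular $\ker\psi\supseteq\ker\rho=\rH^1_f(V)=\Hom_G(W,U_H)$, which is the stated containment.

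It remains to compute the dimension of the target of $\psi$. Surjectivity of $\psi$ gives $\dim\ker\psi=\dim\rH^1_{f,p}(V)-\dim\Hom(W_1^+,\overline{\Q}_p)=(d^++f)-\dim W_1^+$. Since $\varrho$ is unramified at $p$, its restriction to $\rG_{\Q_p}$ is semisimple, so $W\simeq W^+\oplus W^-$ as $G_p$-modules and therefore $W_1=\rH^0(\Q_p,W)\simeq\rH^0(\Q_p,W^+)\oplus\rH^0(\Q_p,W^-)=W_1^+\oplus W_1^-$; since $\dim W_1^-=e$ by \textsection\ref{sec:extra zeros}, this yields $\dim W_1^+=f-e$, whence $\dim\ker\psi=(d^++f)-(f-e)=d^++e$. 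I do not expect a genuine obstacle here: the one slightly delicate point is the clean identification $\ker\rho=\rH^1_f(V)$ through Lemma~\ref{lem:BK_desc}(i), and it is worth remarking that, unlike several nearby statements such as Lemma~\ref{rho-Leo} and Proposition~\ref{prop:D_est_regulier}, this lemma is unconditional, the relevant dimension formulas in \textsection\ref{sec:dim} resting only on Dirichlet's Unit Theorem rather than on the ($\varrho$-part of the) Leopoldt Conjecture.
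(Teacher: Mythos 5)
Your proof is correct, but it takes a different route from the paper's. The paper disposes of the lemma in two lines: the first map in the composite is surjective because $j_D$ is surjective (Lemma~\ref{lem:kappaD}), the second because it is induced by $\ord_p\colon E^\times\twoheadrightarrow\Z$, and the kernel dimension is then the count $(d^++f)-(f-e)$. That argument leans on the standing hypothesis of \textsection\ref{sec:construction_L_invariant} that $D$ is regular, since the surjectivity (indeed bijectivity) of $j_D$ is exactly where regularity enters. You instead drop the restriction to $W^+$, identify the kernel of the unrestricted map $\rho$ with $\rH^1_f(V)$ via Lemma~\ref{lem:BK_desc}, and get surjectivity of $\rho$ (hence of $\psi$) from the unconditional dimension formulas $\dim\rH^1_f(V)=d^+$ and $\dim\rH^1_{f,p}(V)=d^++f$ of \textsection\ref{sec:dim}; the containment $\Hom_G(W,U_H)\subseteq\ker\psi$ then comes for free, and you supply the justification $\dim W_1^+=f-e$ (semisimplicity of the unramified local action) that the paper leaves implicit. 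What your approach buys is precisely the observation you make at the end: the lemma holds for an arbitrary $p$-refinement $W^+$, with no regularity assumption, resting only on Dirichlet's Unit Theorem; what the paper's approach buys is brevity, since $j_D$ has already been analyzed and regularity of $D$ is in force throughout that section anyway. Both arguments finish with the same bookkeeping $(d^++f)-(f-e)=d^++e$.
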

\begin{proof}
The first map is surjective because of the surjectivity of $j_D$, while  the second map is surjective  because it is induced by $\ord_p : E^\times \twoheadrightarrow \Z$. Thus the composed map is surjective and its kernel has dimension $(d^++f) -(f-e)=d^++e$ (see \textsection\ref{sec:dim}).
\end{proof}
\begin{notation}\label{nota:regulateur}
Let us fix a basis $(w^+_1,\ldots,w^+_{f-e})$ of $W_1^+$
which we complete to a basis $(w^+_1,\ldots,w^+_{d^+})$ of $W^{+}$, and also to a basis $(w^+_1,\ldots,w^+_{f-e},w^-_1,\ldots,w^-_{e})$ of 
$W_1=\rH^0(\Q_p,W)$. Note that $(w^-_1,\ldots,w^-_{e})$ maps onto a basis of $W^-_1$ under the natural projection map $W \twoheadrightarrow W^-$.

 Fix also a basis $(\kappa_1,\ldots, \kappa_{d^+})$ of $\Hom_{G}\left(W,U_H\right)$ which we complete to a basis $(\kappa_1,\ldots, \kappa_{d^+},\kappa'_1,\ldots, \kappa'_{e})$ of  the kernel of the map considered in Lemma~\ref{lem:def_psi'}.
	Our choices allow us to construct global units and $p$-units in the following way. 
	\begin{itemize}
		\item 
	 The global units  $\varepsilon_{i,j}^+=\kappa_j(w^+_i)$ and $\varepsilon_{i,j}^-=\kappa_j(w^-_i)$ belong to   $U_H$, and taking $p$-adic logarithms yields two matrices \[A^+=\left[\log_p(\varepsilon_{i,j}^+)\right]_{1\leqslant i,j \leqslant d^+} \textrm{  and  }A^-=\left[\log_p(\varepsilon_{i,j}^-)\right]_{\substack{1\leqslant i \leqslant e\phantom{^+} \\ 1\leqslant j \leqslant d^+}}\] whose entries are in $\overline{\Q}_p$. 
	\item 	 The global $p$-units $u_{i,j}^+=\kappa'_j(w^+_i)$ and $u_{i,j}^-=\kappa'_j(w^-_i)$ belong to  $U_H^{(p)}$, and taking $p$-adic logarithms yields two matrices \[B^+=\left[\log_p(u_{i,j}^+)\right]_{\substack{1\leqslant i \leqslant d^+ \\ 1\leqslant j \leqslant e\phantom{^+}}}\textrm{   and   }B^-=\left[\log_p(u_{i,j}^-)\right]_{1\leqslant i,j \leqslant e}\] whose entries are in $\overline{\Q}_p$.
		We  put $O^-=\left[\ord_p(u_{i,j}^-)\right]_{1\leqslant i,j \leqslant e}$. 
    \end{itemize}
\end{notation}

Note that $\det A^+$ is precisely the $p$-adic regulator $\Reg_p(W,W^+)$ from 
Definition~\ref{def:p-refinement}(iv), and the regularity of $W^+$ implies that 
 $\det A^+\ne 0$ by Proposition~\ref{prop:D_est_regulier}(ii). 

\begin{theorem}\label{thm:calcul_L_inv}
Assume $\Reg_p(W,W^+)$ is non-zero, so that the $\varphi$-submodule $D$ given by \eqref{eq:module_reg} is regular. The $\sL$-invariant associated with $V$ and $D$ is given by the following formula:\[\sL(V,D)=(-1)^e \frac{\det\begin{pmatrix}
	A^+ & B^+ \\ A^- & B^-
	\end{pmatrix} }{\det A^+ \cdot \det O^-}\]
where $A^\pm$, $B^\pm$ and $O^-$ are the matrices defined in Notation~\ref{nota:regulateur}. 
\end{theorem}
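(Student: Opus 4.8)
The plan is to compute both the numerator-free quantity $\sL(V,D)=\det(j_{D,f}\circ j_{D,c}^{-1}\mid D_1)$ and the matrix on the right-hand side with respect to carefully matched bases, and to check that the two agree up to the sign $(-1)^e$ and the normalising factors $\det A^+$ and $\det O^-$. First I would unwind the definition of $\rH^1(D,V)$: by Lemma~\ref{lem:kappaD} together with Frobenius reciprocity, an element $\kappa\in\rH^1_{f,p}(V)=\Hom_G(W,U_H^{(p)})$ lies in $\rH^1(D,V)$ exactly when $\iota_p\circ\kappa$ vanishes on $W^+$ after projecting to $\cO_E^1\otimes\overline\Q_p$, i.e.\ when $\iota_p\circ\kappa$ lands in $\Hom_{G_p}(W^-,\widehat{E^\times}\otimes\overline\Q_p)$. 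I claim that in the notation of \ref{nota:regulateur} one can produce an explicit basis of $\rH^1(D,V)$: for each $j=1,\dots,e$, take the combination $\kappa'_j-\sum_i c_{ij}\kappa_i$ where the coefficients $c_{ij}$ are chosen (using $\det A^+\ne 0$) so that the resulting homomorphism, localised and restricted to $W^+$, has trivial $\log_p$-image on $W^+$; since it already has trivial $\ord_p$-image on $W^+$ (both $\kappa'_j$ and $\kappa_i$ lie in the kernel of the map of Lemma~\ref{lem:def_psi'}), the localisation does land in $\Hom_{G_p}(W^-,\widehat{E^\times}\otimes\overline\Q_p)$. Concretely $(c_{ij})=(A^+)^{-1}B^+$, so these basis vectors are governed by the Schur complement $B^- - A^-(A^+)^{-1}B^+$.

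Next I would compute $j_{D,c}$ and $j_{D,f}$ on this basis. Tracking through the definitions, $j_{D,c}$ sends the $j$-th basis vector to $\ord_p$ of its $W^-$-component, which on the chosen dual basis of $W_1^-$ (coming from $w_1^-,\dots,w_e^-$) is recorded by the $j$-th column of $O^- - A^-(A^+)^{-1}(\text{the }\ord_p\text{-part of }B^+)$; but the $\ord_p$-part of $B^+$ is zero because the $u_{i,j}^+=\kappa'_j(w_i^+)$ sit in the kernel of the Lemma~\ref{lem:def_psi'} map, so $j_{D,c}$ is represented simply by the matrix $O^-$. Similarly $j_{D,f}$ sends the same basis vector to $-\log_p$ of its $W^-_1$-component, which is recorded by $-(B^- - A^-(A^+)^{-1}B^+)$. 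Hence $j_{D,f}\circ j_{D,c}^{-1}$ on $D_1\cong\Hom(W_1^-,\overline\Q_p)$ is represented by $-(B^- - A^-(A^+)^{-1}B^+)(O^-)^{-1}$, and taking determinants gives
\[
\sL(V,D)=(-1)^e\,\frac{\det\!\bigl(B^- - A^-(A^+)^{-1}B^+\bigr)}{\det O^-}
=(-1)^e\,\frac{\det\!\begin{pmatrix}A^+ & B^+\\ A^- & B^-\end{pmatrix}}{\det A^+\cdot\det O^-},
\]
using the standard Schur complement identity $\det\left(\begin{smallmatrix}A^+&B^+\\A^-&B^-\end{smallmatrix}\right)=\det A^+\cdot\det(B^--A^-(A^+)^{-1}B^+)$, valid since $\det A^+\ne 0$.

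The main obstacle I anticipate is bookkeeping: pinning down the several identifications ($D_1$ with $\Hom(W_1^-,\overline\Q_p)$ via $j_{D,c}$; the dual bases of $W_1^+$ and $W_1^-$; the splitting $W^-=W_1^-\oplus W_{-1}^-$ and the resulting $D=D_1\oplus D_{-1}$) so that the matrix entries genuinely line up with the blocks $A^\pm$, $B^\pm$, $O^-$, and in particular verifying the signs — there is one $(-1)^e$ from the $-\log_p$ in the definition of $j_{D,f}$, and one must check no further sign is introduced by the choice of orientation in $j_{D,c}$ or by $\det D_1$ versus $\det\Hom(W_1^-,\overline\Q_p)$. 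A secondary point worth stating carefully is why the $\ord_p$-parts of $\kappa_i(w^+_j)$ and $\kappa_i(w^-_j)$ vanish, namely that $\varepsilon^\pm_{i,j}\in U_H$ are genuine global units (not merely $p$-units); this is what makes $A^+$ and $A^-$ matrices of $\log_p$'s of units and is implicitly where the regularity hypothesis $\det A^+\ne0$, via Proposition~\ref{prop:D_est_regulier}, guarantees the linear systems above are solvable and $j_{D,c}$ invertible.
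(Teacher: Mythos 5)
Your proposal is correct and follows essentially the same route as the paper: you build the explicit basis of $\rH^1(D,V)$ from the coefficients $(A^+)^{-1}B^+$ (using $\det A^+\ne 0$ and the fact that the $\kappa_i$, $\kappa'_j$ kill the $\ord_p$-map of Lemma~\ref{lem:def_psi'} on $W^+$), compute the matrices of $j_{D,f}$ and $j_{D,c}$ as the Schur complement $B^--A^-(A^+)^{-1}B^+$ and $\pm O^-$, and conclude by the block-determinant identity, exactly as in the paper's proof, up to an immaterial sign convention in the chosen basis vectors. The only slip is in your description of the correction term for $j_{D,c}$ (it would involve the $\ord_p$ of the $\varepsilon^-_{i,k}$, not an ``$\ord_p$-part of $B^+$''), but it vanishes anyway because the $\varepsilon^\pm_{i,j}$ are genuine global units, as you yourself point out at the end.
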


\begin{proof}
We first define convenient bases $\widetilde{\kappa}_1,\ldots,\widetilde{\kappa}_e$ and $\eta_1,\ldots,\eta_e$ of $\rH^1(D,V)$ and $D_1$ respectively and then express the matrices of $j_{D,f}$ and $j_{D,c}$ in these bases in terms of $A^\pm,B^\pm$ and $O^-$.

Let us complete  $(\kappa_1,\ldots, \kappa_{d^+},\kappa'_1,\ldots, \kappa'_{e})$ into a $\overline{\Q}_p$-basis $(\kappa_1,\ldots, \kappa_{d^+},\kappa'_1,\ldots, \kappa'_{f})$ of $\Hom_G(W,U_H^{(p)})$ such that the image of $(\kappa'_{e+1},\ldots, \kappa'_{f})$ under the surjective map in Lemma~\ref{lem:def_psi'} is the dual basis of $(w^+_1,\ldots,w^+_{f-e})$ of $\left(W_1^+\right)^*$. In other words,  for $1\leqslant i \leqslant d^+$ and  $e+1\leqslant j \leqslant f$, we have 
\[\ord_p(\kappa'_j(w^+_i))=\begin{cases} 1, \text{ if } e+i=j, \\  0, \text{ otherwise}.\end{cases}\]

Take $\kappa \in \rH^1_{f,p}(V)$ and write $\kappa= \sum_{i=1}^{d^+}c_i\kappa_i + \sum_{j=1}^{f}c'_j\kappa'_j$ for $c_j,c'_j\in \overline{\Q}_p$. We have $\kappa \in \rH^1(D,V)$ if and only if $\log_p\left(\kappa(w^+_i)\right)= \ord_p\left(\kappa(w^+_i)\right)=0$, i.e.
\[c'_{e+1}=\ldots=c'_f=0, \text{ and } A^+\cdot C_\kappa + B^+ \cdot C_\kappa' = 0, \]
where $C_\kappa$ and $C'_\kappa$ are the column vectors whose entries are $(c_1,\ldots,c_{d^+})$ and $(c'_1,\ldots,c'_e)$ respectively. As $A^+$ is invertible,  for all  $\kappa\in \rH^1(D,V)$ we have 
\[C_\kappa=-(A^+)^{-1}B^+C_\kappa'.\]
It follows that  $\rH^1(D,V)$ admits a basis $(\widetilde{\kappa}_1,\ldots,\widetilde{\kappa}_e)$ defined by 
\[\widetilde{\kappa}_j= \left(\sum_{i=1}^{d^+}c_{i,j}\kappa_i\right) - \kappa'_j, \]
where $(c_{1,j},\ldots,c_{d^+,j})$ is the $j$-th column of the matrix $(A^+)^{-1}B^+$, $1\leqslant j \leqslant e$. Letting  $(\eta_1,\ldots,\eta_e)$ denote the basis of $D_1=\Hom_{G_p}\left(W_1^-,\overline{\Q}_p \right)$ which is dual to $(w^-_1,\ldots,w^-_e)$, for  $1\leqslant j \leqslant e$, one has
 \[\begin{array}{l} j_{D,f}(\widetilde{\kappa}_j) = \sum_{i=1}^e \left(\log_p(u_{i,j}^-) -c_{1,j} \log_p(\varepsilon_{i,1}^-) -\ldots-c_{d^+,j}\log_p(\varepsilon_{i,d^+}^-) \right)\eta_i, \\ 
j_{D,c}(\widetilde{\kappa}_j) = - \sum_{i=1}^e \ord_p(u_{i,j}^-)\eta_i.\end{array}\] 
The matrices of $j_{D,f}$ and $j_{D,c}$ in our distinguished bases are thus given by $ B^- - A^-(A^+)^{-1}B^+ $ and $-O^-$ respectively, and the claimed formula follows by 
a computation of a determinant of a block matrix using the fact that $A^+$ is invertible. 
\end{proof}

\begin{remark}\label{rem:simplified_expression_L_inv}
\begin{enumerate}[(i)]
	\item One easily checks that the formula given in Theorem~\ref{thm:calcul_L_inv} does not depend on the choice of bases of $W^+$ and $W_1$. Furthermore, one always can find suitable $\overline{\Q}_p$-bases in which $\det O^-=1$.
	\item 
	Assume that  $\Frob_p$ acts  trivially on $W$, i.e. $W_1=W$ and $e=d^-$, and choose  $(w^-_1,\ldots,w^-_e)$ to be a basis of the linear subspace $W^\circ$ from  Lemma~\ref{rho-Leo}, which by Proposition~\ref{prop:D_est_regulier} is a supplement of 	$W^+$.
	Then   $A^-$ is the zero matrix and the above formula becomes $\sL(V,D)=(-1)^e\frac{\det B^-}{\det O^-}$.  
	\item 
	In view of the results in this subsection, the effective computation of $\sL(V,D)$ 
	relies a priori on that of $(\kappa_1,\ldots,\kappa_{d^+},\kappa'_1,\ldots,\kappa'_e)$, which  amounts to computing the $\varrho$-isotypic components of $U_H$ and of $U_H^{(p)}$. 	
	This could be achieved by applying an idempotent (with algebraic coefficients) to a Minkowski unit $\varepsilon_0$ in $U_H$, or more generally to  any  element $\varepsilon_0$ generating $U_H$ as a $G$-module. In the same vein, since $U_H^{(p)}/U_H\simeq \Ind_{G_p}^G \overline{\Q}_p$ by Dirichlet's Unit Theorem, any sufficiently generic element $u_0$ in $U_H^{(p)}$ generates $U_H^{(p)}/U_H$ as a $G$-module and can be used to determine the $\kappa'_j$'s. In contrast,  
	determining a basis of $\rH^1(D,V)$ would involve linear equations with 
	possibly transcendental coefficients. 	
	
	The situation is particularly nice when 
	$d^+=e=f=1$ and  $W^+=W^{\Frob_p=\beta}$. Let $e_\rho\in \overline{\Q}[G]$  be the idempotent attached to $\rho$ and define $e_{W^+},e_{W_1}\in \overline{\Q}[G_p]$ as \[e_{W^+}=\sum_{i=1}^{\vert G_p\vert }\beta^{-i}\cdot \Frob_p^{i} \quad \mbox{and} \quad e_{W_1}=\sum_{i=1}^{\vert G_p\vert}\Frob_p^{i}.\] 
	We can obtain $\varepsilon_{1,1}^\pm$ from $\varepsilon_0$ by letting $\varepsilon_{1,1}^+=e_{W^+}\cdot e_\rho\cdot\varepsilon_0$ and $\varepsilon_{1,1}^-=e_{W_1}\cdot e_\rho\cdot\varepsilon_0$ and similarly we can take $u_{1,1}^+=e_{W^+}\cdot e_\rho\cdot u_0$  and $u_{1,1}^-=e_{W_1}\cdot e_\rho\cdot u_0$.
		Such  numerical  computations have been    performed for certain $3$-dimensional representations in \cite{DLR2}. 
\end{enumerate}	 
\end{remark}

\subsection{The dual $\sL$-invariant}\label{sec:dualL}
 In this section, we give an alternative expression of the $\sL$-invariant that derives from a dual construction given in \cite[\textsection2.3]{benoishorte} which will be used in \textsection\ref{adjoint-CM-irreg}. Perrin-Riou's definition of regularity actually applies to $D_\crys(W)$ and one checks that a submodule $D\subseteq D_\crys(V)$ is regular if and only if its orthogonal complement $D^\perp$ for the natural pairing $D_\crys(V) \times D_\crys(W) \to D_\crys(\overline{\Q}_p(1))$ is a regular submodule of $D_\crys(W)$. We now fix a regular submodule $D$ of $D_\crys(V)$ and we keep the same notations as in the previous sections. 

By Class Field Theory and Frobenius reciprocity one has 
\[\rH^1_{f,p}(W) = \ker\left(\Hom_{G_p}\left(E^{\times},W\right)\longrightarrow \Hom_G({\cO_H\left[\tfrac{1}{p}\right]^\times},W)\right).\]
Consider the linear subspace $\rH^1(D^\perp,W)$ of $ \rH^1_{f,p}(W)$, 
consisting of elements $\kappa$ such that $\kappa(E^{\times})\subset W^+ + W_1$.  Computations dual to those in Section~\ref{sec:construction_L_invariant} show that $\rH^1(D^\perp,W)$ is the module introduced by Benois-Horte in \cite[\textsection2.3.6]{benoishorte} and that it has dimension $e$ over $\overline{\Q}_p$.
For any $\kappa \in \rH^1(D^\perp,W)$ we denote by $\kappa^-$ its composition with the 
projection $(W^+ + W_1)\twoheadrightarrow W_1^-$   and we put 
\[j_{D^\perp,f}(\kappa)=\kappa^-(p), \qquad j_{D^\perp,c}(\kappa)=\frac{\kappa^-(1+p)}{\log_p(1+p)}.\]
This defines two linear maps $j_{D^\perp,f},j_{D^\perp,c}: \rH^1(W,D^\perp) \to W_1^-$ such that  $j_{D^\perp,c}$ is an isomorphism. 
When specialized to our setting, the  construction of the dual $\sL$-invariant $\sL(W,D^\perp)$ given in \cite[Definition 2.3.7]{benoishorte},  leads to the following formula: 
\[\sL(W,D^\perp) = (-1)^e \det \left( j_{D^\perp,f} \circ \left(j_{D^\perp,c}\right)^{-1} \vert W_1^-\right).\]
The following proposition is \cite[Prop.~2.3.8]{benoishorte}. Note that the sign $(-1)^e$ corresponds to the sign in the conjectural functional equation for $p$-adic $L$-functions, as explained in detail in \cite[\textsection2.2.6, \textsection2.3.5]{benois:L-invariant}.
 \begin{proposition}\label{dual-L-inv}
 	Assume that $D$ is a regular submodule of $D_\crys(V)$. Then
 	\[\sL(W,D^\perp) = (-1)^e  \sL(V,D).\]
 	\end{proposition}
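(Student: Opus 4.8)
The plan is to prove Proposition~\ref{dual-L-inv} by a direct comparison of the two explicit determinantal formulas: the one for $\sL(V,D)$ obtained in Theorem~\ref{thm:calcul_L_inv} and the dual one for $\sL(W,D^\perp)$ just set up above. Since both $\sL$-invariants are honest determinants of size $e$ built out of $p$-adic logarithms and valuations of the same collection of $p$-units in $H$, the equality should reduce to a linear-algebra identity once the two sides are expressed with respect to compatible bases. I would first set up a single pair of bases adapted to \emph{both} constructions: keep the basis $(\kappa_1,\dots,\kappa_{d^+},\kappa'_1,\dots,\kappa'_e)$ of the kernel from Notation~\ref{nota:regulateur} on the $V$-side, and on the $W$-side use Class Field Theory and Frobenius reciprocity to identify $\rH^1_{f,p}(W)$ with an explicit $\Hom$-space of the same units, so that the pairing $D_\crys(V)\times D_\crys(W)\to D_\crys(\overline{\Q}_p(1))$ becomes the evident perfect pairing between $\Hom(W/W^+,\overline{\Q}_p)$-type and $W^+$-type spaces. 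Under this dictionary $\rH^1(D,V)$ and $\rH^1(D^\perp,W)$ are exact orthogonal complements relative to the global Tate/Poitou duality pairing restricted to the semilocal term at $p$.

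The core of the argument is then to exhibit the two maps $j_{D,f}, j_{D,c}$ on $\rH^1(D,V)$ and $j_{D^\perp,f}, j_{D^\perp,c}$ on $\rH^1(D^\perp,W)$ as (up to an explicit sign and the normalization $\log_p(1+p)$) \emph{adjoint} to one another under the local duality pairing $\rH^1(\mathbf{D})\times \rH^1(\mathbf{D}^*(1))\to \overline{\Q}_p$. This is exactly the mechanism Benois and Benois--Horte use in \cite{benois:L-invariant,benoishorte}: the submodules $\mathbf{D}_1\subset \mathbf{D}$ and $(\mathbf{D}^*(1))_1\subset \mathbf{D}^*(1)$ are orthogonal complements, and the "finite" and "coboundary" splittings of $\mathbf{D}$ and its dual are interchanged by local duality. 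Concretely, I would check that the bilinear form $(\kappa,\lambda)\mapsto \langle \kappa,\lambda\rangle_p$ pairs $j_{D,f}(\kappa)\in D_1$ against $j_{D^\perp,c}(\lambda)\in W_1^-$ and $j_{D,c}(\kappa)$ against $j_{D^\perp,f}(\lambda)$, with matching constants, so that in dual bases the matrix of $j_{D^\perp,f}\circ j_{D^\perp,c}^{-1}$ on $W_1^-$ is the transpose of the matrix of $j_{D,f}\circ j_{D,c}^{-1}$ on $D_1$ (again up to the overall sign $(-1)^e$). Taking determinants, which are invariant under transpose, immediately yields $\sL(W,D^\perp)=(-1)^e \sL(V,D)$.

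Concretely, rather than re-deriving the duality abstractly, I would lean on the formula from Theorem~\ref{thm:calcul_L_inv}: $\sL(V,D)=(-1)^e \det\!\begin{pmatrix} A^+ & B^+ \\ A^- & B^-\end{pmatrix}/(\det A^+\cdot\det O^-)$, and derive the completely parallel expression for $\sL(W,D^\perp)$ by running the dual computation of \cite[\textsection2.3]{benoishorte} in our explicit unit-theoretic setting. Because the same matrices $A^\pm, B^\pm, O^-$ (transposed and reindexed) govern the dual construction — the $W$-side "evaluates" the same cocycles on the complementary pieces $W^+ + W_1$ versus $W^-$ — the dual determinant comes out equal to the $V$-side determinant divided by the \emph{same} $\det A^+\cdot\det O^-$, and the two signs $(-1)^e$ compose to give the stated relation. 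The bookkeeping of which block goes where is routine once the pairing is pinned down.

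The main obstacle I expect is precisely pinning down that the local Tate duality pairing $\rH^1(\mathbf{D})\times\rH^1(\mathbf{D}^*(1))\to\overline{\Q}_p$, after the Kummer-theoretic identifications of Lemma~\ref{lem:BK_desc} and the $(\varphi,\Gamma)$-module computations, is \emph{exactly} the naive pairing coming from $\ord_p$ and $\log_p$ on $\widehat{E^\times}\otimes\overline{\Q}_p$ — with the correct signs and the correct appearance of $\log_p(1+p)$ — and that under it $\rH^1(D,V)^\perp=\rH^1(D^\perp,W)$ as subspaces, not merely up to something of lower dimension. Getting these normalizations to line up (including checking that the sign in the conjectural functional equation, recorded in the $(-1)^e$ of the statement, is the one produced by the duality) is the delicate point; everything else is block-matrix manipulation using the invertibility of $A^+$ already established via $\Reg_p(W,W^+)\neq 0$. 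Since the proposition is quoted as \cite[Prop.~2.3.8]{benoishorte}, I would ultimately present this as a verification that our explicit formulas are compatible with Benois--Horte's general duality statement, citing \cite[\textsection2.2.6,\ \textsection2.3.5]{benois:L-invariant} for the sign.
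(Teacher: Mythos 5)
The first thing to compare against is that the paper contains no proof of this proposition: it is quoted as \cite[Prop.~2.3.8]{benoishorte}, with only the remark that the sign $(-1)^e$ is the one occurring in the conjectural functional equation (cf. \cite[\textsection2.2.6, \textsection2.3.5]{benois:L-invariant}). So your proposal is not an alternative to an internal argument of the paper; it is an outline of the duality mechanism that underlies Benois--Horte's own proof, and in your last sentence you in effect fall back on citing them. If that is the intended logical structure, the honest statement is simply ``this is \cite[Prop.~2.3.8]{benoishorte}'', which is what the paper does.

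If instead you mean to give a self-contained proof, there is a genuine gap, and it sits exactly where you flag it: the assertion that, after the Kummer-theoretic identifications of Lemma~\ref{lem:BK_desc} and the explicit descriptions of \textsection\ref{sec:dualL}, the local duality pairing is the naive $(\ord_p,\log_p)$-pairing with the exact constants (including the $\log_p(1+p)$ normalization and the minus sign in the definition of $j_{D,f}$), that $\rH^1(D,V)$ and $\rH^1(D^\perp,W)$ are exact orthogonal complements, and that the pairing interchanges the finite and coboundary splittings, \emph{is} the entire content of the proposition; none of it is verified in your sketch, and nothing in the paper's explicit framework supplies it for free. Two further points need repair. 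First, the sign bookkeeping: since $\sL(W,D^\perp)$ is \emph{defined} with the factor $(-1)^e$, the desired identity amounts to the equality of the operator determinants of $j_{D,f}\circ j_{D,c}^{-1}$ on $D_1$ and $j_{D^\perp,f}\circ (j_{D^\perp,c})^{-1}$ on $W_1^-$ with no extra sign, so saying the matrices are transposes ``up to the overall sign $(-1)^e$'' double-counts that sign. Second, the claim that the same matrices $A^\pm$, $B^\pm$, $O^-$ of Notation~\ref{nota:regulateur}, merely transposed and reindexed, govern the dual construction is unsubstantiated: elements of $\rH^1(D^\perp,W)$ are $G_p$-equivariant maps $E^\times\to W$ killing global $p$-units, and their explicit description (as in \textsection\ref{adjoint-CM-irreg}, where one needs the cocycles $\eta_\varphi$, $\eta_\gp$ and input from \cite{betina-dimitrov}) is not a re-indexing of the unit matrices; any comparison with the $V$-side data passes through the duality pairing and involves the inverse of the full block matrix, not its transpose, so the ``routine bookkeeping'' would in fact have to be carried out.
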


\section{Examples of computation of  $\sL$-invariant}\label{sec:examples}
\subsection{Gross' $p$-adic regulator}\label{sec:gross}
We assume throughout this paragraph that $d^+=0$, i.e. any complex conjugation in $\Gal(\overline{\Q}/\Q)$ acts on $W$ as $-1$. Then the Galois extension $H/\Q$ cut out by $\varrho$ is a CM field. Conversely, if $\varrho$ is irreducible and cuts out a CM field $H$, then 
any complex conjugation defines a non-trivial central element in $G=\Gal(H/\Q)$ which is necessarily scalar,  hence equal to $-1$, by Schur's lemma. The unique $p$-refinement $W^+=\{0\}$ of $W$ is automatically regular
 and the corresponding regular submodule is $D=D_\crys(V)$. Note that we have 
 \[e=\dim \rH^0(\Q_p ,W).\] 
One may attach to $\varrho$ a $p$-adic $L$-function by using Deligne-Ribet's construction \cite{deligne-ribet} together with Brauer's induction theorem (see \cite{greenbergartinII}). Gross \cite{gross81} has proposed a conjectural formula for the leading term of this $p$-adic $L$-function at $s=0$ in terms of a certain $\sL$-invariant which is nowadays referred to as Gross's $p$-adic regulator. We briefly recall its definition and relate it to Benois' $\sL$-invariant. 

Let $Y$ be the $\overline{\Q}_p$-vector space $\bigoplus_\gp \overline{\Q}_p\cdot\gp$, where the direct sum runs over the set $S_p(H)$ of all  primes $\gp$ of $H$ above $p$. We  denote by $X \subset Y$ the kernel of the augmentation map $\sum_\gp a_\gp \cdot \gp \mapsto \sum_{\gp} a_\gp$. 
 Note that $G$ naturally acts on both $X$ and $Y$ (see \textit{e.g.} \cite[Chap.~I~\textsection3]{tate}), and $\Hom_G(W,X)=\Hom_G(W,Y)$ as $Y=X\oplus \overline{\Q}_p$. 
For $\gp\in S_p(H)$, Gross \cite[(1.8)]{gross81} defines the local absolute value of $\alpha \in  H_\gp$ by letting $\vert \vert \alpha\vert \vert _{\gp,p}=p^{-\ord_\gp(\alpha)f_p}\textbf{N}_{H_\gp/\Q_p }(\alpha)$, where $f_p$ is  the residual  degree  of $H_\gp$ and $\textbf{N}_{H_\gp/\Q_p }$ is the norm map for the field extension $H_\gp/\Q_p$. Gross' $p$-adic regulator map is given by
\[\lambda_p: U_H^{(p)} \longrightarrow Y, \qquad 
	 u \mapsto \sum_{\gp\in S_p(H)} \log_p (\vert \vert u\vert \vert _{\gp,p})\cdot\gp, \]
whose image lies in $X$ by the product formula. 
Gross introduces another $G$-equivariant map 
\[o_p: U_H^{(p)} \longrightarrow Y, \qquad
u \mapsto \sum_{\gp\in S_p(H)} f_p\cdot \ord_{\gp}(u)\cdot\gp,\]
 (see \emph{loc. cit.}, (1.12) and (1.20)). Since $\dim \Hom_G(W,U_H)=d^+=0$, the map $\Hom_G(W,U_H^{(p)}) \to \Hom_G(W,Y)=\Hom_G(W,X)$ induced by $o_p$ is injective, and it is in fact an isomorphism of $e$-dimensional $\overline{\Q}_p$-vector spaces.
 Gross' regulator for $W^*=\Hom_{\overline{\Q}_p}(W,\overline{\Q}_p)$ is then defined as 
\[R_p(W^*)=\det\left(\lambda_p \circ o_p^{-1}\ \big\vert  \ \Hom_G(W,X)\right).\]
Here, we still denote by $\lambda_p$ and $o_p$ the maps obtained by post-composing by $\lambda_p$ and $o_p$ respectively. Note that, under the canonical identification $(W^*\otimes -)^G=\Hom_G(W,-)$, these maps coincide with the maps $1\otimes \lambda_p,1\otimes o_p \ \colon \ (W^*\otimes U_H^{(p)})^G \to (W^*\otimes X)^G$  appearing  in Gross' definition (\textit{loc. cit.}, (2.10)). 
\begin{proposition}\label{prop:gross_reg}
	Let $D=D_\crys(V)$. One has $\sL(V,D)= (-1)^e\cdot R_p(W^*)$.
\end{proposition}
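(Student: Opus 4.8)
The plan is to compare Benois' $\sL$-invariant, as computed in Theorem~\ref{thm:calcul_L_inv}, with Gross' regulator $R_p(W^*)$ by identifying the linear maps entering both determinants. Since $d^+=0$, the matrices $A^+$ and $A^-$ of Notation~\ref{nota:regulateur} are empty, and Theorem~\ref{thm:calcul_L_inv} collapses to $\sL(V,D)=(-1)^e\det B^-/\det O^-$, where $B^-=\left[\log_p(u_{i,j}^-)\right]$ and $O^-=\left[\ord_p(u_{i,j}^-)\right]$ with $u_{i,j}^-=\kappa'_j(w_i^-)$, the $\kappa'_j$ running over a basis of $\Hom_G(W,U_H^{(p)})$ and the $w_i^-$ over a basis of $W_1=W_1^-=\rH^0(\Q_p,W)$. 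So the first step is to unwind both $B^-$ and $O^-$ as matrices of $G$-equivariant (equivalently, after Frobenius reciprocity, $G_p$-equivariant) maps.

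First I would make the translation between the local logarithm/order data used in Benois' formula and Gross' maps $\lambda_p$ and $o_p$ explicit. Gross works with $\vert\vert u\vert\vert_{\gp,p}=p^{-\ord_\gp(u)f_p}\textbf{N}_{H_\gp/\Q_p}(u)$, so $\log_p\vert\vert u\vert\vert_{\gp,p}=\log_p\textbf{N}_{H_\gp/\Q_p}(u)$ (the $p^{-\ord_\gp(u)f_p}$ factor dies under $\log_p$ since $\log_p(p)=0$ in the Iwasawa normalization), and $\textbf{N}_{H_\gp/\Q_p}$ is, up to the usual identification, the trace down to $\Q_p$ composed with $\log_p$ — i.e. under Frobenius reciprocity this is precisely the map $\log_p$ appearing in \eqref{eq:frob-rec} and in the diagram \eqref{eq:log-BK}. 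Likewise $o_p$ sends $u$ to $\sum_\gp f_p\ord_\gp(u)\cdot\gp$, which matches the $\ord_p$ map used to define $O^-$ once one keeps track of the residual degree and the bijection between $S_p(H)$ and the $G$-set $G/G_p$ (equivalently $\Hom_{G_p}(W^-,p^{\overline{\Q}_p})$ versus $\Hom_G(W,X)$). So the second step is the identification $\Hom_G(W,X)=\Hom_G(W,Y)\cong\Hom_{G_p}(W,p^{\overline{\Q}_p})$ and, compatibly, $j_{D,f}$ (restricted here to all of $\rH^1(D,V)$ since $W^+=0$) with $1\otimes\lambda_p$ and $j_{D,c}$ with $1\otimes o_p$, up to the scalar bookkeeping.

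With these identifications in place, both $\sL(V,D)=(-1)^e\det(j_{D,f}\circ j_{D,c}^{-1}\mid D_1)$ and $(-1)^e R_p(W^*)=(-1)^e\det(\lambda_p\circ o_p^{-1}\mid\Hom_G(W,X))$ become the determinant of the same endomorphism, up to verifying that the sign conventions and the identification $D_1=\Hom(W_1^-,\overline{\Q}_p)$ versus $\Hom_G(W,X)$ introduce no spurious factor — in particular that the $\log_p(1+p)$-type normalization implicit in Benois' $j_{D,c}$ and the $f_p$ factors in Gross' $o_p$ cancel, which is forced by the fact that a determinant of a map between two spaces is independent of the chosen identifications as long as source and target are identified by transposes of one another. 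The main obstacle I expect is precisely this normalization/duality bookkeeping: tracking the precise $\overline{\Q}_p$-scaling and the transpose in the Frobenius reciprocity isomorphisms $\Hom_{G_p}(W^-,-)$ versus $\Hom_G(W,\Ind_{G_p}^G-)$, and checking that Gross' choice of local norms and Benois' choice of $(\varphi,\Gamma)$-module cohomology produce \emph{identical} (not merely proportional) matrices $B^-,O^-$ on the nose; once that is pinned down, the equality of determinants — and hence $\sL(V,D)=(-1)^e R_p(W^*)$ — is immediate.
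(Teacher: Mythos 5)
Your overall strategy is the same as the paper's: with $d^+=0$ Theorem~\ref{thm:calcul_L_inv} gives $\sL(V,D)=(-1)^e\det B^-/\det O^-$, and the proposition then reduces to identifying Gross' maps $\lambda_p,o_p$ with the maps $\log_p,\ord_p$ defining $B^-$ and $O^-$. But the identification itself, which is the entire content of the proof, is asserted rather than established, and the two justifications you offer for it do not work. First, it is not true that $\log_p\Vert\cdot\Vert_{\gp,p}=\mathrm{Tr}_{H_\gp/\Q_p}\circ\log_p$ coincides under Frobenius reciprocity with the map $\log_p$ of \eqref{eq:frob-rec}: that Frobenius reciprocity isomorphism is evaluation through the single fixed embedding $\iota_p$, not a trace, and in general the two differ. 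They only become comparable after restriction to $W_1=\rH^0(\Q_p,W)$, and the reason is the observation you never make: for $\kappa\in\Hom_G(W,U_H^{(p)})$ and $w\in W_1$, the value $\iota_p(\kappa(w))$ lies in $\bigl(E^\times\otimes\overline{\Q}_p\bigr)^{G_p}=\Q_p^\times\otimes\overline{\Q}_p$, and on $\Q_p^\times$ one has $\log_p\Vert\cdot\Vert_{\gp_0,p}=f_p\cdot\log_p$ and $f_p\ord_{\gp_0}=f_p\cdot\ord_p$. Second, your expectation that the matrices must agree ``on the nose'' is the wrong target, and the abstract principle you invoke (independence of the determinant under identifications that are ``transposes of one another'') does not force the cancellation of the normalizations. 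What actually happens is that, under the \emph{same} identification $\pi\colon\Hom_G(W,X)\xrightarrow{\sim}\Hom(W_1,\overline{\Q}_p)$ given by projection to the $\gp_0$-coordinate (an isomorphism because $W$ contains no trivial subrepresentation), both comparisons acquire the \emph{same} factor $f_p$: $\pi\circ\lambda_p=f_p\cdot\log_p$ and $\pi\circ o_p=f_p\cdot\ord_p$ on $\Hom_G(W,U_H^{(p)})$. It is this common factor, not a duality formalism, that cancels in $\det\bigl(\lambda_p\circ o_p^{-1}\bigr)$ and yields $R_p(W^*)=\det(B^-)\det(O^-)^{-1}$.

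Two smaller points. The $\log_p(1+p)$ normalization you worry about belongs to the dual construction of \textsection\ref{sec:dualL} and plays no role here: once you invoke Theorem~\ref{thm:calcul_L_inv}, the Benois side is already fully explicit in terms of $B^-$ and $O^-$, so the only thing left to unwind is Gross' side. Also, you should use at some point that $H/\Q$ is unramified at $p$ (so $\ord_{\gp_0}=\ord_p$ on $\Q_p^\times$ and $\mathbf{N}_{H_{\gp_0}/\Q_p}$ raises elements of $\Q_p^\times$ to the power $f_p$); without making the restriction to $W_1$ and this local computation explicit, the step from your ``bookkeeping'' to the equality of determinants is exactly the gap.
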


\begin{proof}
	Let $\gp_0\in S_p(H)$ be the  prime above $p$ determined by $\iota_p$ and put $W_1=\rH^0(\Q_p ,W)$. Consider the following diagram 
	\[\xymatrix{
		\Hom_{G}(W,X) \ar[rr]^\pi && \Hom_{G_p}(W,\overline{\Q}_p) \ar@{=}[rr]&& \Hom(W_1,\overline{\Q}_p), \\
		&& \Hom_{G}(W,U_H^{(p)}) \ar[ull]_{\lambda_p}^{o_p} \ar[urr]^{\log_p}_{\ord_p}&&
	}\]
where the map $\pi$ is induced by the projection $\sum_{\gp} a_\gp \cdot \gp \in X \mapsto a_{\gp_0}$. By Frobenius reciprocity $\pi$ is an isomorphism because $W$ does not contain the trivial representation.  We claim that the above diagram becomes commutative when we multiply the maps $\log_p$ and $\ord_p$ by the same factor $f_p$, that is, the maps $\pi \circ \lambda_p$ and $\pi \circ o_p$ respectively coincide with $f_p\cdot \log_p$ and $f_p \cdot \ord_p$. 
Indeed, if $\kappa\in \Hom_G(W,U_H^{(p)})$, then $\kappa_{\vert W_1}$ takes values in $(U_H^{(p)})^{G_p}$, hence $\iota_p\circ \kappa_{\vert W_1}$ lands in $(H_{\gp_0}^\times
\otimes\overline{\Q}_p  
)^{G_p}=  \Q_p ^\times \otimes \overline{\Q}_p $. Our claim then follows from the fact that the map $\log_p \circ\vert \vert \cdot\vert \vert _{\gp_0,p}$ coincides with $f_p\cdot \log_p$ on $\Q_p^\times$. This proves that 
$R_p(W^*)$ is equal to the determinant of the map $\log_p \circ \ord_p^{-1}$ acting on $\Hom(W_1,\overline{\Q}_p)$. In terms of Notation~\ref{nota:regulateur}, it is then clear that $R_p(W^*)=\det(B^-)\cdot\det(O^-)^{-1}$, so $\sL(V,D)= (-1)^e\cdot R_p(W^*)$ by Theorem~\ref{thm:calcul_L_inv}.
\end{proof}

\subsection{Weight $1$ cuspforms}

Given a weight $1$ newform $f$, we let $\varrho_f$ denote the corresponding 
 Artin representation constructed by Deligne and Serre, which ramifies precisely at the primes dividing the level of $f$. 
  Since $\varrho_f$  is two-dimensional and 
odd, one has $d^+=d^-=1$, which can be thought of as a regularity condition at $\infty$. 
It might be worth recalling that the Khare--Wintenberger proof of Serre's Modularity Conjecture implies that all  
two-dimensional odd  Artin representations are obtained in this way. Since $\varrho_f$ has  finite image, one can  choose a $\overline{\Q}$-rational basis for its image and consider $\varrho_f$  as  $\overline{\Q}_p$-valued, via the fixed embedding $\iota_p: \overline{\Q}\hookrightarrow \overline{\Q}_p$. It is then crystalline at $p$ if and only if $p$ does not divide the level of $f$, which we will henceforth assume.

We say  that $f$ is $p$-regular if the eigenvalues  $\alpha$ and $\beta$ of $\varrho_f(\Frob_p)$  are distinct, and
$p$-irregular otherwise.

\subsubsection{The $p$-regular case} \label{reg-general}
Assume that $f$ is $p$-regular. The choice of a $p$-refinement $W^+$ (see Definition~\ref{def:p-refinement}) amounts to choosing between $\alpha$ and $\beta$. 
We let $f_\alpha$ denote the $p$-stabilization of $f$ having $U_p$-eigenvalue $\alpha$ and let  $W^+=W_\beta$ be
the line on which $\Frob_p$ acts by multiplication by $\beta\in \overline{\Q}^\times$. As $\alpha\ne\beta$, $W^+$ admits an $\overline{\Q}$-rational basis,  i.e. it is a motivic $p$-refinement in the sense of Definition~\ref{def:p-refinement}. It is thus a regular $p$-refinement by Proposition~\ref{prop:D_est_regulier}. The $\sL$-invariant $\sL(f_\alpha)$ of $f_\alpha$ is defined as  $\sL(V,D_\alpha)$ where $D_\alpha$ is the regular submodule attached to $W_\beta$ (see (\ref{eq:module_reg})). One checks that $e=0$ if $\alpha \ne 1$, in which case one has $\sL(f_\alpha)=1$, and that $e=1$ if $\alpha=1$. Thus, the conjectural Perrin-Riou $p$-adic $L$-function $L_p(f_\alpha,s)$ attached to $f_\alpha$, i.e. to choices of  lattices in $V$ and in $D_\alpha$, has a trivial zero at $s=0$ if and only if $\alpha=1$. A potential candidate for $L_p(f_\alpha,s)$ is constructed (up to a non-zero constant) by Bella\"iche and the first author in \cite{bellaiche-dimitrov} and  some of its properties are studied by the second author in \cite{maksoud}. 
It is namely shown   that it vanishes at $s=0$ when $\alpha=1$, as expected.
	
	Assuming  $e=1$, let $(w_\alpha,w_\beta)$ be a basis of $W$ consisting of eigenvectors for $\Frob_p$ and let $(\kappa,\kappa')$ be a $\overline{\Q}_p$-basis of $\Hom_G(W,U_H^{(p)})$ such that $\kappa$ takes values in $U_H$. These choices pin down two elements 
	$\varepsilon_\alpha=\kappa(w_\alpha)$ and $\varepsilon_\beta=\kappa(w_\beta)$ in 
	$U_H$ and two elements
		$u_\alpha=\kappa'(w_\alpha)$ and $u_\beta=\kappa'(w_\beta)$ in $U^{(p)}_H$, and  Theorem~\ref{thm:calcul_L_inv} yields:
		\[\sL(f_\alpha)= \frac{\log_p(\varepsilon_\alpha)\log_p(u_\beta)-\log_p(\varepsilon_\beta)\log_p(u_\alpha)}{\log_p(\varepsilon_\beta)\ \ord_p(u_\alpha)}.\]

\subsubsection{The $p$-irregular case}\label{irreg-general}

	 Assume that $f$ is $p$-irregular. Then $\Frob_p$ acts as the scalar $\alpha=\beta$ on $W$ and any line $W^+\ne  W^\circ$ of $W$ defines a regular $p$-refinement.  One has again $e=0$ (and hence, $\sL(V,D)=1$) if $\alpha\ne 1$ and $e=1$ if $\alpha=1$. Assuming $e=1$, one can compute  $\sL(V,D)$ as in Section~\ref{sec:computation_L_invariant}. We choose a basis $w^+$ of $W^+$ 
	 that we complete into a basis $(w^+,w^-)$ of $W$. Given  a  basis $\kappa$  of 
	 $\Hom_G(W,U_H)$, Lemma~\ref{lem:def_psi'} ensures the existence of 
	  $\kappa'\in \Hom_G(W,U_H^{(p)})$  non-proportional to $\kappa$ and such that $\ord_p(\kappa'(w^+))=0$. As in the $p$-regular case, these choices pin down two elements 
	$\varepsilon^\pm=\kappa(w^\pm)$  in 
	$U_H$ and two elements		$u^\pm=\kappa'(w^\pm)$  in $U^{(p)}_H$,   with the extra property that $\ord_p(u^+)=0$. By Theorem~\ref{thm:calcul_L_inv}, one has the following expression of the $\sL$-invariant:\[\sL(V,D)= \frac{\log_p(\varepsilon^-)\log_p(u^+)-\log_p(\varepsilon^+)\log_p(u^-)}{\log_p(\varepsilon^+)\ \ord_p(u^-)}.\]

\subsubsection{The CM case} \label{CM-case}
Let  $f$ be a weight one cuspform having complex multiplication by an imaginary quadratic field $K$. Then 
$\varrho_f=\Ind_\Q^K\psi$ with $\psi:\rG_K \to \overline{\Q}_p$   a  finite order character cutting out an abelian extension $H_\psi/K$.  As $\varrho_f$ is irreducible, $\psi$  is different from its  conjugate $\bar\psi=\psi(\tau\cdot \tau)$, allowing us to decompose 
 $W$ as a direct sum  $\overline{\Q}_p$-lines  $W_\psi$  and $W_{\bar\psi}$ on which $\rG_K$ acts by $\psi$ and ${\bar\psi}$, respectively.  Fix a basis $e_1$ of $W_\psi$ and put $e_2=\tau(e_1)\in W_{\bar\psi}$.
 The fixed field $H$ of $\ker(\varrho_f)$ is the compositum  of $H_\psi$ and $H_{\bar\psi}$, and we let
 $G=\Gal(H/\Q)$.   For any $\overline{\Q}_p[G]$-module $M$, Frobenius reciprocity  allows us to canonically identify $\Hom_G(W,M)$ with the $\psi$-isotypic component $M[\psi]$ of $M$. We now give a simpler expression for the $\sL$-invariants computed in \textsection\ref{reg-general}-\ref{irreg-general} when $p$ splits in $K$.

 We parameterize the lines of $W$ by $\mathbb{P}^1(\overline{\Q}_p)$ by letting  $W^+_s=\overline{\Q}_p(e_1+se_2)$ when $s\in  \overline{\Q}_p$ and
 $W_\infty^+=\overline{\Q}_p e_2$.
 Write $p\cO_K=\gp\bar\gp$, where $\gp$ is the prime determined by $\iota_p$ and $\bar\gp=\tau(\gp)$. Note that $\{\alpha,\beta\}=\{\psi(\gp),\psi(\overline{\gp})\}$.

 Given a basis $\varepsilon_\psi$ of the line $U_H[\psi]$ 
  the slope of $\psi$ is defined as   (see \cite[\textsection1.1]{betina-dimitrov})
 \begin{equation}\label{eq:def_slope}
\sS_\psi=-\frac{\log_p(\varepsilon_\psi)}{\log_p(\tau(\varepsilon_\psi))}\in \overline{\Q}_p^\times. 
 \end{equation}
 As $G_p \subseteq  \Gal(H/K)$, one can check that $W_s^+$ is a regular $p$-refinement if and only if 
 \begin{itemize} 
\item  either $\psi(\gp)\ne \psi(\bar\gp)$ and $s\in\{0,\infty\}$ (the $p$-regular case),  
\item or   $\psi(\gp)=\psi(\bar\gp)$ and $s \ne \sS_\psi$ (the $p$-irregular case). 
 \end{itemize}

  Let $D_s$ be the regular submodule associated with $W^+_s$ and assume that we are in presence of trivial zeros, i.e.  $\psi(\bar\gp)=1$ if $s=0$, $\psi(\gp)=1$ if $s=\infty$, or $\psi(\gp)=\psi(\bar\gp)=1$ in the $p$-irregular setting. Then $d^+=d^-=e=1$. 
  
  Assuming that $\psi(\bar\gp)=1$, the cyclotomic $\sL$-invariant of $\psi$ is defined 
as   
\begin{equation}\label{eq:def_L_inv_psi}
	\sL(\psi)=-\frac{\log_p(\tau(u^\circ_\psi))}{\ord_p(\tau(u^\circ_\psi))},
\end{equation}
where $u^\circ_\psi$ a basis of the line in $U_H^{(p)}[\psi]$ defined by the equations  $\log_p(\cdot)=\ord_p(\cdot)=0$
(see \cite[Rem.~1.5(iii)]{buyukboduk-sakamoto}). 
In order to avoid the presence of the possibly transcendental equation $\log_p(\cdot)=0$, we consider the plane in $U_H^{(p)}[\psi]$ cut out  simply by the  rational equation $\ord_p(\cdot)=0$
(the careful reader might notice the analogy with \textsection\ref{sec:computation_L_invariant} where $\rH^1(D_s,V)$ was  replaced by a more concrete space of a larger dimension). 
It is easy to see that given any $\bar\gp$-unit 
$u_\psi\in U_H^{(p)}[\psi]$ (which is not a unit), the set $\{\varepsilon_\psi, u_\psi\}$ forms a basis of this plane, and furthermore  $u^\circ_\psi$ from 
\eqref{eq:def_L_inv_psi} can be taken to be  $u_\psi\otimes \log_p(\varepsilon_\psi)-
\varepsilon_\psi\otimes \log_p(u_\psi)$. Thus one retrieves the more explicit formula from  \cite[(6)]{betina-dimitrov}
\begin{equation}\label{eq:def_L_inv_psi-bis}
	\sL(\psi)=-\frac{\log_p(\tau(u_\psi))+\sS_{\bar\psi}\log_p(u_\psi)}{\ord_p(\tau(u_\psi))}. 
\end{equation}

\begin{proposition} 
\begin{enumerate}[(i)]
\item In the $p$-irregular case, for all  $s\ne \sS_\psi$, we have the following formula
 \[\sL(V,D_s)= \frac{s\cdot \sL(\bar\psi)-\sS_\psi\cdot \sL(\psi)}{s-\sS_\psi}.\]
 	\item In the $p$-regular case one has  $\sL(V,D_0)=\sL(\psi)$,  if $\psi(\bar\gp)=1$,  and $\sL(V,D_\infty)=\sL(\bar\psi)$, if  $\psi(\gp)=1$. 
 \end{enumerate}
 \end{proposition}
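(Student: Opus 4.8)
The strategy is to reduce everything to the formula in Theorem~\ref{thm:calcul_L_inv}, using the CM decomposition $W = W_\psi \oplus W_{\bar\psi}$ and Frobenius reciprocity to make all the Hom-spaces completely explicit. Throughout I work with the basis $e_1$ of $W_\psi$ and $e_2=\tau(e_1)$ of $W_{\bar\psi}$, and I identify $\Hom_G(W,M)$ with $M[\psi]$ by evaluation at $e_1$. The key point is that, since $p$ splits in $K$ and $G_p \subseteq \Gal(H/K)$, the local decomposition at $p$ of $W$ is exactly $W_\psi \oplus W_{\bar\psi}$: the arithmetic Frobenius $\Frob_p$ acts on $e_1$ by $\psi(\gp)$ and on $e_2$ by $\psi(\bar\gp)$ (or the other way around depending on conventions — I would pin this down via $\iota_p$ and $\gp_0$).

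\textbf{Step 1 (the $p$-regular case, (ii)).} Suppose $\psi(\bar\gp)=1$ and $s=0$, so $W^+=W^+_0=\overline{\Q}_p e_1 = W_\psi$ and $W^-=W/W^+ \cong W_{\bar\psi}$, on which $\Frob_p$ acts trivially; thus $e=1$ and $W_1^- = W^-$. Picking $w^+ = e_1$, $w^- = e_2$ as in \textsection\ref{irreg-general}, the distinguished units become $\varepsilon^+ = \kappa(e_1)$, $\varepsilon^- = \kappa(e_2)$, $u^+ = \kappa'(e_1)$, $u^- = \kappa'(e_2)$. Under the identification with $\psi$-isotypic components, $\kappa(e_1)=\varepsilon_\psi$ generates $U_H[\psi]$ and $\kappa(e_2)=\tau(\varepsilon_\psi)$ (since $\kappa$ is $G$-equivariant and $e_2=\tau(e_1)$), so $\log_p(\varepsilon^+) = \log_p(\varepsilon_\psi)$ and $\log_p(\varepsilon^-)=\log_p(\tau(\varepsilon_\psi))$. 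Likewise $u^+=u_\psi'$ is a $p$-unit in $U_H^{(p)}[\psi]$ with $\ord_p(u^+)=0$ — i.e. a $\bar\gp$-unit — and $u^- = \tau(u')$. Plugging into the $p$-irregular formula of \textsection\ref{irreg-general} (which applies verbatim once $e=1$),
\[
\sL(V,D_0)=\frac{\log_p(\tau(\varepsilon_\psi))\log_p(u') - \log_p(\varepsilon_\psi)\log_p(\tau(u'))}{\log_p(\varepsilon_\psi)\,\ord_p(\tau(u'))},
\]
and comparing with \eqref{eq:def_L_inv_psi-bis} after dividing numerator and denominator by $\log_p(\varepsilon_\psi)$ and recalling the definition of $\sS_{\bar\psi}$, one gets exactly $\sL(\psi)$. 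The case $s=\infty$, $\psi(\gp)=1$ is symmetric under swapping $\psi \leftrightarrow \bar\psi$, $e_1 \leftrightarrow e_2$.

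\textbf{Step 2 (the $p$-irregular case, (i)).} Now $\psi(\gp)=\psi(\bar\gp)=1$, so $\Frob_p$ acts as the identity on all of $W$; hence $W_1 = W$, $e=1$, and $W^\circ$ is a line. We take $W^+ = W^+_s = \overline{\Q}_p(e_1 + s e_2)$ and complete to a basis of $W$; a convenient choice of $w^-$ is a generator of $W^\circ$, so that $A^-=0$ and Remark~\ref{rem:simplified_expression_L_inv}(ii) gives $\sL(V,D_s) = (-1)^e \det B^- / \det O^-$ — but it is cleaner to instead use $\textsection\ref{irreg-general}$ directly with $w^+ = e_1 + s e_2$, $w^- = e_2$. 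Then $\varepsilon^+ = \kappa(e_1+se_2)=\varepsilon_\psi + s\tau(\varepsilon_\psi)$, $\varepsilon^- = \tau(\varepsilon_\psi)$, and $u^\pm$ the analogous $p$-units with $\ord_p(u^+)=0$. One writes $u^+ = u' + s\tau(u'')$-type combinations; the bookkeeping is routine. The resulting expression, after substituting \eqref{eq:def_slope} for $\sS_\psi$ and \eqref{eq:def_L_inv_psi-bis} for $\sL(\psi)$ and $\sL(\bar\psi)$ and clearing the common factor, collapses to $\bigl(s\,\sL(\bar\psi) - \sS_\psi\,\sL(\psi)\bigr)/(s-\sS_\psi)$. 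The normalization $\ord_p(u^+)=0$ is what forces the $\bar\gp$-unit normalization in \eqref{eq:def_L_inv_psi-bis}, so the two definitions are compatible; this is the one place to be careful.

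\textbf{Main obstacle.} The genuine mathematical content is small — everything is Frobenius reciprocity plus linear algebra — but the bookkeeping of normalizations is delicate: one must track (a) the precise action of $\Frob_p$ on $e_1,e_2$ relative to $\iota_p$ so that the ``$\psi(\bar\gp)=1$'' condition correctly matches ``$s=0$ is regular'', (b) the identification of the abstract $\kappa,\kappa'$ of \textsection\ref{irreg-general} with genuine $\psi$-isotypic units $\varepsilon_\psi$, $u_\psi$ and their $\tau$-conjugates, and (c) the passage between the space $\rH^1(D_s,V)$-normalization implicit in Theorem~\ref{thm:calcul_L_inv} and the ad hoc $\bar\gp$-unit normalization of $\sL(\psi)$ in \eqref{eq:def_L_inv_psi-bis}. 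I expect (c) — reconciling the two different ``concrete models'' of the $e$-dimensional extra-zero space — to be the subtlest point, and the proof should spend a sentence or two making the dictionary explicit (noting, as the paper already hints, the analogy between replacing $\rH^1(D_s,V)$ by the $\ord_p=0$ plane and the larger-dimensional construction of \textsection\ref{sec:computation_L_invariant}).
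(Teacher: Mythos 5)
Your proposal is correct and follows essentially the same route as the paper: both reduce to Theorem~\ref{thm:calcul_L_inv} (in the form of \textsection\ref{reg-general}--\ref{irreg-general}) via the evaluation-at-$e_1$ identification $\Hom_G(W,U_H^{(p)})\simeq U_H^{(p)}[\psi]$, choose $\kappa'$ with the normalization $\ord_p(\kappa'(w^+))=0$, and translate the resulting ratios of logarithms into $\sS_\psi$, $\sL(\psi)$, $\sL(\bar\psi)$. The only (harmless) difference is that in case (i) the paper takes $w^-$ to be the generator $e_1+\sS_\psi e_2$ of $W^\circ$ and uses the $\log_p$-normalized units $u^\circ_\psi$, $u^\circ_{\bar\psi}$ of \eqref{eq:def_L_inv_psi}, so that Remark~\ref{rem:simplified_expression_L_inv}(ii) reduces everything to a single ratio $-\log_p/\ord_p$, whereas your choice $w^-=e_2$ with generic $\bar\gp$-units and \eqref{eq:def_L_inv_psi-bis} requires the full $2\times 2$ determinant --- and the ``routine bookkeeping'' you omit does indeed collapse to the stated formula.
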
 
 
 \begin{proof} We apply the formalism of \textsection\ref{sec:computation_L_invariant} noting that
  a natural isomorphism between $\Hom_G(W,U_H^{(p)})$ and 
 $U_H^{(p)}[\psi]$ is obtained by evaluating at $e_1\in  W_\psi$. 
 Moreover the line $W^\circ$ from Lemma~\ref{rho-Leo} has basis $e_1+ \sS_\psi e_2$. 
 
 (i) Assume that $\psi(\gp)=\psi(\bar\gp)=1$ and fix $s\ne \sS_\psi$. 
A supplement of  the line  $\Hom_{G}\left(W,U_H\right)$ in the kernel of the map in 
 Lemma~\ref{lem:def_psi'} is  generated by an element $\kappa'_s\in \Hom_G(W,U_H^{(p)})$ such that $\ord_p(\kappa'_s(e_1+se_2))=0$.
 Rescaling  the units $u^\circ_\psi$  so that $\ord_p(\tau(u^\circ_\psi))=1$, one finds that the element 
 $\kappa'_s$ determined by  $\kappa'_s(e_1)=u^\circ_\psi \cdot \tau(u^\circ_{\bar\psi})^{-s}\in U_H^{(p)}$
 has all the required properties, and of course   
 $\kappa'_s(e_2)=\tau(u^\circ_\psi) \cdot (u^\circ_{\bar\psi})^{-s}$. Moreover, as $G_p$ acts trivially on $W^\circ$,   Remark~\ref{rem:simplified_expression_L_inv}(ii)
 applies and we have
 \[\sL(V,D_s)=-\frac{\log_p\left(\kappa'_s(e_1+ \sS_\psi e_2)\right)}{\ord_p\left(\kappa'_s(e_1+ \sS_\psi e_2)\right)}=-\frac{-s\cdot \log(\tau(u^\circ_{\bar\psi}))+ \sS_\psi \log(\tau(u^\circ_\psi))}{-s+\sS_\psi},\]
so the formula follows from \eqref{eq:def_L_inv_psi}.

(ii) By symmetry, it suffices to compute $\sL(V,D_0)$ when $\psi(\bar \gp)=1\ne \psi( \gp)$. Following Notation~\ref{nota:regulateur}, we have  $W^+=\overline{\Q}_p e_1$,  
$W_1=\overline{\Q}_p e_2$ and $W_1^+=\{0\}$, in particular the  map in Lemma \ref{lem:def_psi'} is the zero map.  Let $\kappa$ be a basis of $\Hom_G(W,U_H)$ that we complete in a basis 
 $(\kappa,\kappa'_0)$  of $\Hom_G(W,U_H^{(p)})$ such that  $\kappa_0'(e_1)=u_\psi^\circ$. Since $\log_p(\kappa'_0(e_1))=0$, i.e. $B^+=(0)$, Theorem \ref{thm:calcul_L_inv} yields
\[\sL(V,D_0)= - \frac{\log_p(\kappa'_0(e_2))}{\ord_p(\kappa'_0(e_2))}=\sL(\psi),\]
as claimed. 
\end{proof} 

\subsection{Adjoint of a weight $1$ $p$-regular cuspform}\label{sec:ad_reg}

Let $\varrho$ be the adjoint of the Artin representation $\varrho_f$ attached by Deligne and Serre to a  weight one cuspform $f$ of level relatively prime to $p$. As in the introduction, we assume that $\varrho_f$ has $\overline{\Q}$-coefficients and we view it via $\iota_p$ as a $p$-adic representation. 
Let $\alpha$ and $\beta$ be the eigenvalues of  $\varrho_f(\Frob_p)$ and assume  that $\alpha\ne \pm \beta$ (so $f$ is in particular $p$-regular). We let $(w_1,w_{\beta/\alpha},w_{\alpha/\beta})$ be a basis of $W$ on which $\Frob_p$ acts with the eigenvalue prescribed in the index and we denote by  $(w^*_1,w^*_{\beta/\alpha},w^*_{\alpha/\beta})$ its dual basis. The $p$-adic realization $V$ of the motive $M=(\ad^0(f))(1)$ has a unique decomposition as $\rG_{\Q_p }$-modules 
$V=\overline{\Q}_p(1)\cdot w^*_1 \bigoplus \overline{\Q}_p(1)\cdot w^*_{\beta/\alpha}\bigoplus \overline{\Q}_p(1)\cdot w^*_{\alpha/\beta}$.  

We now apply the results of \textsection\ref{sec:computation_L_invariant}.
Since one has $d^+=e=f=1$ and $\varrho(\Frob_p)$ acts with pairwise distinct eigenvalues, the line $W^+$ generated by $w_{\beta/\alpha}$ is a motivic $p$-refinement (so it is regular by Proposition~\ref{prop:D_est_regulier}(iii)) and $W_1$ is a line with basis $w_1$. Let $(\kappa,\kappa')$ be a $\overline{\Q}_p$-basis of $\Hom_G(W,U_H^{(p)})$ such that $\kappa$ takes values in $U_H$.
Without loss of generality, we may assume that $w_{\beta/\alpha},w_1$  (resp. $\kappa,\kappa'$) belong to the canonical $\overline{\Q}$-structure of $W$ (resp. of $\rH^1_{f,p}(V)$ described in \textsection\ref{sec:the_BK_logarithm}). By Theorem~\ref{thm:calcul_L_inv}, Benois' $\sL$-invariant for the regular submodule $D=\Hom(W/W^+,\overline{\Q}_p)$ is given by
\[\sL(V, D)= \frac{\log_p(\varepsilon_1)\log_p(u_{\beta/\alpha})-\log_p(\varepsilon_{\beta/\alpha})\log_p(u_1)}{\log_p(\varepsilon_{\beta/\alpha})\ord_p(u_1)},\]
where $\varepsilon_1=\kappa(w_1)$, $\varepsilon_{\beta/\alpha}=\kappa(w_{\beta/\alpha})$, $u_1=\kappa'(w_1)$ and $u_{\beta/\alpha}=\kappa'(w_{\beta/\alpha})$.

Let us assume that $\varrho_f$ is residually irreducible and $p$-distinguished, the latter condition meaning that $\alpha$ and $\beta$ are distinct modulo the maximal ideal of $\overline{\Z}_p$. We suppose that $\varrho_f$ is not induced by a character of a real quadratic field in which $p$ splits, hence by \cite{bellaiche-dimitrov} the eigencurve is \'{e}tale over the weight space at both $p$-stabilizations $f_\alpha$ and $f_\beta$ of $f$.
Using the automorphic deformation of $f_\alpha$, Hida defined an analytic $\sL$-invariant $\sL(\ad^0(f_\alpha))$ for which Darmon, Lauder and Rotger proposed in \cite[Conj. 1.2]{DLR2} a conjectural description in terms of the $p$-adic logarithms of $\varepsilon_1,\varepsilon_{\beta/\alpha},u_1,u_{\beta/\alpha}$ (see also \cite[\textsection5.1]{rivero-rotger}). In light of our computation of Benois' $\sL$-invariant in this setting, the main theorem of Rivero and Rotger \cite[Thm. A']{rivero-rotger} can then be recast as
\[\sL(V, D)= \sL(\ad^0(f_\alpha)) \mod \overline{\Q}^\times.\]

\subsection{Adjoint of a weight $1$   $p$-irregular CM cuspform} \label{adjoint-CM-irreg}

We consider the case of  the Artin motive $W=\ad^0\varrho_f$, where $f$ is a weight one cuspform 
 irregular at $p$ and having CM by an imaginary quadratic field $K$ in which $p$ splits. 
 Letting $\varrho_f=\Ind_K^\Q\psi$, where $\psi$ is  a finite order Hecke character of $K$, the above assumptions translate into the fact that the anti-cyclotomic (or ring class) character $\varphi=\psi/\bar\psi$ is non-trivial, but is locally trivial at (places above) $p$. Let $H$ denote the fixed field of $\ker(\varphi)$. 
 
 \subsubsection{Regular submodules}  Fix a dihedral basis of $\varrho_f$ as in \textsection\ref{CM-case}, so that  $\varrho_{f\vert \rG_K}=\psi\oplus \bar\psi$. This choice yields a matrix representation $\begin{pmatrix} a & b \\ c & -a \end{pmatrix}$ of $\ad^0 \varrho_f$ with basis $\{w_1,w_2,w_3\}$ given by 
 $w_1=\begin{pmatrix} 1 & 0 \\ 0 & -1\end{pmatrix}$ and 
  $w_2=\begin{pmatrix} 0 & 1 \\ 0 & 0\end{pmatrix}$, $w_3=\tau(w_2)=\begin{pmatrix} 0 & 0 \\ 1 & 0\end{pmatrix}$ 
  Moreover, this basis is adapted to the $\rG_\Q$-decomposition $\ad^0\varrho_f= \varepsilon_K  \oplus\Ind_K^\Q\varphi$, where $\varepsilon_K$ is the Dirichlet character associated with the quadratic extension $K/\Q$. If $\varphi$ is quadratic then  
 $\Ind_K^\Q\varphi=\varepsilon_{K'}\oplus\varepsilon_F$, where $K'$ and $F$ are the other two quadratic subfields of $H$. Otherwise  $\Ind_K^\Q\varphi$ is irreducible and we have 
$(\Ind_K^\Q\varphi)_{\vert \rG_K}=\varphi\oplus \bar\varphi$ in the dihedral basis $\{w_2,w_3\}$.

 By Lemma~\ref{lem:BK_desc}(ii) and Frobenius reciprocity one has  a natural isomorphism
\[\rH^1_f(V)=\Hom_G(W,U_H)=\Hom_G(\Ind_K^\Q\varphi,U_H)\xrightarrow{\sim}U_H[\varphi], \]
where $\kappa\in \Hom_G(W,U_H)$ sends $w_1$ to $1$, sends $w_2$ to  
a $\varphi$-unit $u_\varphi\in U_H[\varphi]$  and sends $w_3=\tau(w_2)$ to 
$\tau(u_\varphi)\in U_H[\bar\varphi]$.

We have $d=3$, $d^+=1$ and any line $W^+\subset W$ is $G_p$-stable.  
By Proposition~\ref{prop:D_est_regulier}(ii), $W^+$ is regular  if and only if it is not contained in the plane defined by 
the equation 
\[\log_p(u_\varphi)b+\log_p(\tau(u_\varphi))c=0 \text{, i.e.  }  \sS_\varphi\cdot  b=c, \]
where the slope 
$\sS_\varphi$ is defined in \eqref{eq:def_slope}.  In particular, the only non-regular submodule of
$\Ind_K^\Q\varphi$ is the line generated by $w_2+\sS_\varphi w_3$. 
 \subsubsection{A modular basis of $\rH^1(\Q,\ad^0\varrho_f)$}  \label{sec:basis}
Letting $\sL_\gp=-\frac{\log_p(u_\gp)}{\ord_p(u_\gp)}$ for any non-torsion element $u_\gp$ of $\cO_K[\tfrac{1}{\gp}]^\times$, the  anti-cyclotomic $\sL$-invariant  is defined as $\sLm(\varphi)=\sL(\varphi)-2 \sL_\gp$ (see \eqref{eq:def_L_inv_psi}).

The space $\rH_{f,p}^1(\Q,\ad^0\varrho_f)=\rH^1(\Q,\ad^0\varrho_f)$ is two-dimensional (see \cite[Lem.~3.2]{bellaiche-dimitrov}). We will now give a modular description of this space and use it to write down an explicit basis, under the assumption that 
\begin{equation}\label{generic-CM}
\sLm(\varphi) \cdot \sLm(\bar\varphi) \cdot (\sLm(\varphi) + \sLm(\bar\varphi)) \ne 0. 
\end{equation}
 The condition  \eqref{generic-CM} holds for  $\varphi$  quadratic,  whereas in all other cases the Weak $p$-adic 
 Schanuel Conjecture  predicts that $\sLm(\varphi)$ and $\sLm(\bar\varphi)$  are algebraically independent. 
 
 Fix    a square root   $\xi$ of  $\sLm(\bar\varphi) \sLm(\varphi)^{-1}\sS_{\bar\varphi}$.

It was conjectured in \cite{DLR4}  and proven in \cite{betina-dimitrov} that there is a natural isomorphism between $\rH^1(\Q,\ad^0\varrho_f)$ and the space   $S^\dagger_1\lsem f\rsem_0$ 
of genuine overconvergent modular forms belonging to the generalized eigenspace of $f$. Furthermore one computes in  \cite{betina-dimitrov-JTNB} the $q$-expansions of a basis $\{f_\sF^\dagger, f_\Theta^\dagger\}$ of the latter space.  
The corresponding basis $\{ \kappa_\sF, \kappa_\Theta\}$  of $\rH^1(\Q,\ad^0\varrho_f)$ is characterized by $\kappa_\sF(\tau)= \kappa_\Theta(\tau)=0$ and 
\[\kappa_{\Theta\vert \rG_K}= \begin{pmatrix} \eta_{\gp}-\eta_{\bar\gp} & 0 \\ 0 & \eta_{\bar\gp}-\eta_{\gp}\end{pmatrix}, 
 \text{ and  }  
\kappa_{\sF\vert \rG_K}= \begin{pmatrix} 0 &  \eta_\varphi \\   \bar\eta_\varphi  & 0 \end{pmatrix}.
\]
Here  $\eta_\varphi:\rG_K\to \bar\Q_p$ represents the cocycle $[\eta_\varphi]\in \rH^1(K,\varphi)$, 
normalized so that $\loc_{p}([\eta_\varphi])=\log_p\in \rH^1(\Q_p,\bar\Q_p)$, whereas  
$(\eta_\gp, \eta_{\bar\gp})$ is a  basis  of $\rH^1(K,\bar\Q_p)$ with $\eta_\gp$
is unramified outside $\gp$ such that $\loc_{p}(\eta_\gp)=\log_p$, and $\eta_{\bar\gp}=\eta_\gp\circ\tau$.

 \subsubsection{The $\sL$-invariant}  As in \textsection\ref{CM-case}  
 we parameterize the lines of $W$ by $s\in \mathbb{P}^1(\overline{\Q}_p)\setminus {\sS_\varphi}$ and $t\in \overline{\Q}_p$ 
  by letting  $W^+_{s,t}=\overline{\Q}_p(t w_1+w_2+s w_3)$ with the convention that 
 $W^+_{\infty,t}=\overline{\Q}_p (t w_1+w_3)$. The resulting projection  $W\to W^-=W/W^+_{s,t}$ is given by 
$\begin{pmatrix}a& b \\ c &-a\end{pmatrix}\mapsto \begin{pmatrix} a- t b \\ c - s b \end{pmatrix}$.

It is proven in  \cite[Lem.~1.4]{betina-dimitrov} that $\eta_{\bar\gp}(\Frob_\gp)=\eta_{\gp}(\Frob_{\bar\gp})=-\sL_\gp$. Using Local Class Field Theory,  the fact that $\eta_{\gp}$ is unramified outside $\gp$ allows us to compute:
\[ \eta_{\gp\vert K_\gp^\times}(p)=\frac{\eta_{\gp\vert K_\gp^\times}(u_\gp \bar{u}_\gp)}{\ord_p(u_\gp)}=
0+\frac{\log_p(\bar{u}_\gp)}{\ord_p(u_\gp)}=\sL_\gp.\]

Using also \cite[Prop~1.5]{betina-dimitrov}, one finds: 
\begin{align}  j_{D^\perp,f}(\kappa_\Theta)= \begin{pmatrix} 2\sL_\gp \\ 0 \end{pmatrix}, \qquad & 
j_{D^\perp,f}(\kappa_\sF)=  \begin{pmatrix}  -t \sL(\bar\varphi) \\ \sS_\varphi  \sL(\varphi)- s  \sL(\bar\varphi) \end{pmatrix},\\
j_{D^\perp,c}(\kappa_\Theta)= \begin{pmatrix} 1 \\ 0 \end{pmatrix}, \qquad& 
j_{D^\perp,c}(\kappa_\sF)=\begin{pmatrix} -t \\ \sS_\varphi - s  \end{pmatrix}.
\end{align}
As $e=2$, applying Proposition~\ref{dual-L-inv} yields
\begin{equation}\label{eq:irr-CM-L}
\begin{split}
&\sL(V,D_{s,t})=(-1)^2 \sL(W,W^+_{s,t})\\
&=\frac{\det \begin{pmatrix} 2\sL_\gp &  -t \sL(\bar\varphi) \\  0 & \sS_\varphi  \sL(\varphi)- s  \sL(\bar\varphi) \end{pmatrix}}
{\det  \begin{pmatrix} 1& -t \\ 0&\sS_\varphi - s  \end{pmatrix}}=
\frac{2\sL_\gp\cdot \left(\sS_\varphi  \sL(\varphi)- s  \sL(\bar\varphi)\right)}{\sS_\varphi - s}. 
\end{split}
\end{equation}

We would like to conclude by determining the regular submodules of arithmetic significance. 
As already noted $\ad^0(\varrho_f)$ being locally trivial at $p$, it does not point out in any particular direction. 
Nevertheless, according to \cite{betina-dimitrov} the form $f$ belongs to exactly $4$ Hida families, i.e. the representation 
$\varrho_f$ admits four non-trivial  deformations to  $\overline{\Q}_p\lsem X\rsem$, each determining an ordinary line. 
If this line is given by $\overline{\Q}_p(e_1+s e_2)$ then the line of the corresponding full flag in  $\ad^0(\varrho_f)$ 
is given by $\overline{\Q}_p\left(\begin{smallmatrix} -s & 1 \\ -s^2 & s \end{smallmatrix}\right)= 
\overline{\Q}_p( - s w_1 + w_2- s^2 w_3)$. 

When $s=0$ one finds $\sL(V,D_{0,0})=2\sL_\gp\cdot \sL(\varphi)$, corresponding to a $p$-adic variation  along the CM family $\Theta=\Ind_K^\Q(\psi\chi_\gp)$ containing $f$ whose ordinary line is  given by $W^+_{\Theta}=\overline{\Q}_p e_1$. Hence this 
$\sL$-invariant may be denoted by $\sL(\ad^0(\varrho_f), W^+_{\Theta})$.

Analogously, when $s=\infty$ one finds $\sL(V,D_{\infty,0})=2\sL_\gp\cdot \sL(\bar\varphi)$, corresponding to a $p$-adic variation  along the CM family $\bar\Theta=\Ind_K^\Q(\bar \psi\chi_\gp)$ containing $f$  whose ordinary line is  given by $W^+_{\bar\Theta}=\overline{\Q}_p e_2$. Hence this  $\sL$-invariant may be denoted by $\sL(\ad^0(\varrho_f), W^+_{\bar\Theta})$.

Finally, when   $s=\xi^{-1}$ corresponds to the ordinary line in one of the the non-CM families $\sF$ containing $f$, then $W_{\sF}^+=W^+_{-s^2,-s}$, and similarly $W_{\sF\otimes \varepsilon_K}^+=W^+_{-s^2,s}$.  
Note that these are regular submodules as $-\xi^{-2}=-\sLm(\varphi) \sLm(\bar\varphi)^{-1}\sS_{\varphi}\ne \sS_{\varphi}$ by \eqref{generic-CM}. Using formula \eqref{eq:irr-CM-L}, we get 
 \begin{align}\label{L-inv-F}
& \sL(\ad^0(\varrho_f), W_{\sF}^+)= \sL(\ad^0(\varrho_f), W_{\sF\otimes \varepsilon_K}^+)\\ \nonumber
&= \frac{2\sL_\gp\cdot \left(\sS_\varphi \sL(\varphi)+ \frac{ \sLm(\varphi)}{ \sLm(\bar\varphi)}\sS_{\varphi}\sL(\bar\varphi)\right)}{\sS_\varphi+ \frac{\sLm(\varphi)}{ \sLm(\bar\varphi)}\sS_{\varphi}} = 
\frac{2\sL_\gp\cdot \left(\sLm(\bar\varphi)  \sL(\varphi)+  \sLm(\varphi) \sL(\bar\varphi)\right)}{\sLm(\bar\varphi)+\sLm(\varphi)}. 
 \end{align}

Note that although $W_{\sF\otimes \varepsilon_K}^+$ differs from $W_{\sF}^+$,  the
 resulting  $\sL$-invariant is the same. 

\subsubsection{Compatibility with $p$-adic $L$-functions}
The most promising approach for the construction of $p$-adic $L$-functions for Artin motives is via $p$-adic deformation on an eigenvariety containing sufficiently many points of regular weight. This is explained by Bella\"{i}che \cite{eigenbook} in the case of weight one forms using the modular symbol construction of the Coleman--Mazur eigencurve. His construction of an adjoint 
$p$-adic $L$-function on the eigencurve is  very relevant to our context, except that it misses the cyclotomic variable. 
The upshot is that the Perrin-Riou $p$-adic $L$-function should be easier to construct  and arithmetically more significant to study for the regular submodules  $W^+_\Theta$, $W^+_{\bar \Theta}$, $W^+_\sF$ and $W^+_{\sF\otimes \varepsilon_K}$, as this would allow them to vary in  families. 

We now discuss  potential candidates for such  $p$-adic $L$-functions and the compatibility with our expression for the $\sL$-invariants.

First consider the CM families $\Theta$ and $\bar \Theta$. Note that $W^+_\Theta$ and $W^+_{\bar \Theta}$ actually lie in the first factor of the Galois decomposition $\ad^0(f)= \Ind_K^\Q \varphi \bigoplus \varepsilon_K$. Since Perrin-Riou's conjecture is compatible with the Artin formalism, a natural candidate for the $p$-adic $L$-function in the direction $W^+_\Theta$, resp. $W^+_{\bar \Theta}$, is 
\[L_p(\varphi,s) \cdot L_p(\varepsilon_K\cdot \omega_p,s), \quad \text{ resp. } \quad L_p(\bar{\varphi},s) \cdot L_p(\varepsilon_K\cdot \omega_p,s),\]
where $L_p(\varphi,s)$, resp. $L_p(\bar \varphi,s)$,  is the restriction to the cyclotomic line of Katz's $p$-adic $L$-function for $\varphi$, resp. for $\bar \varphi$ (e.g. see   \cite[\textsection4.3]{betina-dimitrov}) and $L_p(\varepsilon_K \cdot \omega_p,s)$ is the Kubota-Leopoldt $p$-adic $L$-function attached to $\varepsilon_K$. Note that Katz's construction indeed allows us to vary this $p$-adic $L$-function along the families $\Theta$ and  $\bar \Theta$. The expression for $\sL(\ad^0(\varrho_f),W^+_\Theta)$ also factors as a product of two $\sL$-invariants, namely, $\sL(\varphi)$ and $\sL(\varepsilon_K)= 2\cdot \sL_\gp$, the latter being  the $\sL$-invariant introduced by Ferrero and Greenberg \cite{ferrero-greenberg} in their proof of the Trivial Zero Conjecture for $L_p(\varepsilon_K\cdot \omega_p,s)$. The term $\sL(\varphi)$ is the $\sL$-invariant appearing in the Trivial Zero Conjecture for $L_p(\varphi,s)$ proposed in \cite[(45)]{betina-dimitrov}. When $\varphi$ is quadratic, this conjecture is deduced from Gross's factorization formula in \emph{loc. cit.}, end of \textsection4.3. A proof of this conjecture for general $\varphi$ (under mild hypotheses on $p$) based on the equivariant Tamagawa Number Conjecture proven by Bley \cite{bley} has been announced by B\"uy\"ukboduk and Sakamoto \cite{buyukboduk-sakamoto}. 
 Meanwhile Chida and Hsieh \cite{chida-hsieh} provided an entirely different proof removing those assumptions. 

Given a Hida family $\sG$ and letting $\sG^*$ be its dual family, Hida  \cite{hida-AIF88} attaches to the pair $(\sG,\sG^*)$ a three-variable $p$-adic meromorphic function $L_p(\sG,\sG^*,s)$ interpolating the special values of the Rankin-Selberg $L$-functions $L(g,h,s)$ of crystalline classical members $g\in \sG$, $h\in \sG^*$ of weights $\ell,m\geqslant 2$ at integers $m\leqslant s <\ell$. Let $L_p(g,h,s)$ be the specialization of $L_p(\sG,\sG^*,s)$ at $(g,h)$. When $g=h^*$ is a crystalline classical point of $\sG$ of weight $\ell=m\geqslant 2$, Dasgupta's factorization theorem \cite{dasgupta:factorization} asserts that $L_p(g,g^*,s)=\zeta_p(s-\ell+1)L_p(\ad^0(g),s)$ where $\zeta_p(s)$ is the $p$-adic zeta function and $L_p(\ad^0(g),s)$ is Hida-Schmidt's $p$-adic $L$-function \cite{schmidt} for the adjoint motive of $g$. It makes sense to apply the above construction to $\sG\in \{\Theta,\bar \Theta, \sF, \sF \otimes \varepsilon_K\}$, in which case the weight $1$ specialiazation of the $p$-adic meromorphic function $L_p(\sG,\sG^*,s)\cdot \zeta_p(s-\ell+1)^{-1}$ should be related to Perrin-Riou's $p$-adic $L$-function attached to $\ad^0(\varrho_f)$ and $W^+_{\sG}$. Some evidence of this fact for $\sG=\Theta,\bar \Theta$ lies in the analogy with \cite[Theorem 6.2]{rivero-rotger} which treats the case where $f$ is $p$-regular and the weight map is \'etale at $f$. Note that the simple (resp. double) trivial zero in the $p$-regular (resp. $p$-irregular) case is heuristically related to the smoothness (resp. non-Gorensteinness) of the eigencurve at $f$.

\bigskip
 { \noindent{\it Acknolwedgements:} { \small The authors are mostly indebted to D.~Benois for numerous discussions related to this project.
We would like also to thank A.~Betina and J.~Bella\"{i}che for helpful discussions. 
The research leading to this article is jointly funded by the Agence Nationale de Recherche ANR-18-CE40-0029 and the Fonds National de Recherche Luxembourg INTER/ANR/18/12589973
in the project {\it Galois representations, automorphic forms and their L-functions (GALF)}.
}

\bibliographystyle{siam}

\end{document}